\documentclass[11pt]{amsart}

\usepackage[T1]{fontenc}
\usepackage{fullpage}
\usepackage[utf8]{inputenc}
\usepackage{lmodern}
\usepackage{microtype}
\usepackage{amssymb,mathtools}
\usepackage[hidelinks]{hyperref}

\hypersetup{
  pdftitle={A B\'ezout domain that is not an elementary divisor domain},
  pdfauthor={Christian H\"agg and Anders M\"ortberg},
  pdfsubject={B\'ezout domains, elementary divisor domains, and Smith normal form}
}
\numberwithin{equation}{section}

\newtheorem{theorem}{Theorem}[section]
\newtheorem{lemma}[theorem]{Lemma}
\newtheorem{proposition}[theorem]{Proposition}
\theoremstyle{definition}
\newtheorem{definition}[theorem]{Definition}
\newtheorem{construction}[theorem]{Construction}
\newtheorem{notation}[theorem]{Notation}

\newcommand{\Q}{\mathbb{Q}}
\newcommand{\R}{\mathbb{R}}
\newcommand{\N}{\mathbb{N}}
\newcommand{\Z}{\mathbb{Z}}
\newcommand{\fm}{\mathfrak{m}}
\newcommand{\fn}{\mathfrak{n}}
\newcommand{\fp}{\mathfrak{p}}
\newcommand{\fq}{\mathfrak{q}}
\newcommand{\fa}{\mathfrak{a}}
\newcommand{\cF}{\mathcal{F}}
\newcommand{\cR}{\mathcal{R}}
\newcommand{\fb}{\mathfrak{b}}
\DeclareMathOperator{\Spec}{Spec}
\DeclareMathOperator{\GL}{GL}
\DeclareMathOperator{\diag}{diag}
\DeclareMathOperator{\inv}{inv}
\DeclareMathOperator{\maxinv}{maxinv}
\DeclareMathOperator{\ord}{ord}

\title{A B\'ezout domain that is not an elementary divisor domain}
\author[Christian H\"agg]{Christian H\"agg}
\address{Department of Mathematics, Stockholm University}
\email{hagg@math.su.se}
\author[Anders M\"ortberg]{Anders M\"ortberg}
\address{Department of Mathematics, Stockholm University}
\email{anders.mortberg@math.su.se}
\date{\today}

\begin{document}

\begin{abstract}
We settle in the negative the longstanding question whether every B\'ezout domain is an elementary divisor domain by constructing a B\'ezout domain over which an explicit $2\times2$ matrix has no Smith normal form. The obstruction is topological and is detected by the M\"obius line bundle.
\end{abstract}

\maketitle

\noindent\emph{This is an early draft. The complete Lean formalization is available at \url{https://github.com/Zelaron/bezout-counterexample-lean}.}

\bigskip

\section{Introduction}

The Smith normal form over a principal ideal domain is a fundamental structure theorem: every matrix can be diagonalized by invertible row and column operations, with diagonal entries forming a divisibility chain. In 1943, Helmer asked whether this conclusion continues to hold over every B\'ezout domain \cite{Helmer}.\footnote{Helmer uses the term ``Prüfer ring'' for what we here call ``B\'ezout domain''.}  We answer this question in the negative.

Throughout, rings are commutative with identity, and ring homomorphisms preserve the identity. A \emph{B\'ezout domain} is an integral domain in which every finitely generated ideal is principal; equivalently, every two-generated ideal is principal. For a ring $A$, two $m\times n$ matrices $F$ and $G$ over $A$ are \emph{equivalent} if $PFQ=G$ for some $P\in\GL_m(A)$ and $Q\in\GL_n(A)$. A \emph{Smith normal form} of $F$ is an equivalent diagonal matrix whose nonzero diagonal entries occur first and satisfy $d_1\mid d_2\mid\cdots\mid d_r$. A domain is an \emph{elementary divisor domain} if every matrix over it has a Smith normal form. Every elementary divisor domain is B\'ezout, as follows by applying the definition to $1\times2$ matrices. The problem is whether the converse holds.

Helmer proved the elementary divisor theorem for adequate domains \cite[Theorem~3]{Helmer}. Kaplansky subsequently introduced elementary divisor rings and showed that, over a domain, the problem is already detected by $2\times2$ matrices \cite[Theorem~5.1 and the subsequent remark]{Kaplansky}. Gillman and Henriksen constructed a B\'ezout ring with zero divisors that is not an elementary divisor ring \cite[Example~4.11]{GH}, but the domain case remained open.

Our construction starts from
\begin{equation}\label{eq:initial-data}
  A_0=\Q[x,y],\qquad
  \Delta=x^2+y^2-1,\qquad
  M=\begin{pmatrix}1+x&y\\ y&1-x\end{pmatrix},
\end{equation}
where $x$ and $y$ are algebraically independent over $\Q$.

\begin{theorem}\label{thm:main}
There exists a B\'ezout domain that is not an elementary divisor domain.
\end{theorem}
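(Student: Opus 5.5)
We construct a Bézout domain $A$ containing $A_0=\Q[x,y]$ in which the image of $M$ has no Smith normal form. Note that $\det M=1-x^2-y^2=-\Delta$. On the real circle $\Delta=0$ the matrix $M$ has rank one. Its image is the line spanned by $(1+x,y)$, equivalently by $(y,1-x)$. This is the line through the angle $\theta/2$ when $(x,y)=(\cos\theta,\sin\theta)$, so as $\theta$ goes once around the circle the line turns by only half a revolution. Thus the cokernel of $M$ restricted to the circle is the Möbius line bundle, which is nontrivial. The plan is to make $A$ a Bézout domain while keeping enough of the real topology of the circle to detect this twist.

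\textbf{Step 1: construction of $A$.} We build $A$ as a directed union (colimit) $A_0\subset A_1\subset\cdots$ of domains. Each step adjoins, for some pair $a,b$ that is not yet resolved, elements $g,s,t,u,v$ with $a=gu$, $b=gv$ and $g=sa+tb$. Equivalently, we adjoin the relation $su+tv=1$, with $g$ chosen suitably. Alternatively, $A$ is a localization or a subring of a ring of real semialgebraic or Nash-type functions on a suitable space $X\supset\{\Delta=0\}$. Every pair is handled at some stage, so the union is Bézout by a standard bookkeeping argument. It is also a domain, since each stage is a domain and integrality is preserved by directed unions.

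\textbf{Step 2: an invariant surviving each step.} We need a property $P(B)$ of the intermediate rings with three features:
\begin{enumerate}
\item $P(A_0)$ holds.
\item $P$ is preserved by Bézout extension steps, provided the adjoined elements are chosen carefully, i.e.\ as functions that are real-valued on some real points lying over the circle.
\item $P$ is preserved under directed unions.
\end{enumerate}
The natural choice is a connected real "circle" $C_B$ in the real spectrum (or real points) of $B$ that maps onto $\{\Delta=0\}\subset\R^2$ by a map of degree one. The requirement is that pulling $M$ back to $C_B$ keeps the cokernel isomorphic to the Möbius bundle, i.e.\ that the map stays homotopically an isomorphism. To keep the circle, at each step we realize $g,u,v$ as continuous (semialgebraic) functions on $C_B$ that satisfy the required identities and introduce no new zeros. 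A gcd of two functions on a one-dimensional set can be chosen this way, locally by taking the minimal vanishing order, with $s,t$ constructed by partition of unity in the semialgebraic category.

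\textbf{Step 3: the obstruction.} Suppose $PMQ=\diag(d_1,d_2)$ with $d_1\mid d_2$. The entries $1+x$ and $1-x$ are coprime in $A_0$ up to the ideal generated with $y$, and $(1+x,y,1-x)$ contains $2$, a unit. Hence $d_1$ generates the unit ideal, so $d_1$ is a unit. Consequently $\operatorname{coker}M\cong A/(d_2)$ is cyclic, with $d_2$ an associate of $\Delta$. Everything involved lives in some finite stage $A_n$, so we restrict to $C_{A_n}$. There the cokernel of $M$ would be a trivial line bundle, since $A/(d_2)$ is free of rank one over $A/(\Delta)$ on the circle. This contradicts the Möbius twist detected by the first Stiefel–Whitney class.

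\textbf{Main obstacle.} The hard step is the second: performing the Bézout closure without destroying the topology. Naively adjoining gcds can disconnect or collapse the circle, or adjoin an element such as $\sqrt{\cdot}$ whose choice of sign trivializes the bundle. I would first try to work inside the ring of continuous semialgebraic functions on a thickened circle, using the ring structure to control zero sets. The delicate point is to ensure that the resulting ring stays a domain rather than acquiring zero divisors, as happens in the Gillman–Henriksen example. This is probably done by taking germs along a generic point or by passing to a valuation-theoretic construction.
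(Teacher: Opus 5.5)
Your overall architecture is the right one: a directed union of domains in which every pair is eventually principalized, with the M\"obius bundle on real points of a finite stage detecting the failure of a Smith form. Your Step~3 is essentially sound; it is a cokernel version of Lemma~\ref{lem:key-obstruction}. The gap is that Steps~1 and~2, which you yourself flag, carry all the content, and what you sketch for them cannot work.

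\emph{The algebraic step.} Adjoining $g,s,t,u,v$ with $a=gu$, $b=gv$, $g=sa+tb$ does not give a domain. For $(a,b)=(x,y)$ in $\Q[x,y]$ the result is $\Q[g,u,v,s,t]/\bigl(g(1-su-tv)\bigr)$. Imposing $su+tv=1$ repairs this example. But for $(a,b)=(x^2,y)$ the resulting ring contains the relation $x^2=gu$ and is singular and non-factorial. After that, gcds need not exist, and the next adjunction can acquire extra components over codimension-one parts of $V(a,b)$.

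\emph{The topological step.} More seriously, the new elements cannot be realized as continuous functions on an unchanged circle ``with no new zeros''. Already $(1-x,y)$ is not principal in any ring of continuous functions on the unit circle. The same holds on any space containing the circle with $x,y$ as coordinates, which rules out your thickened-circle alternative. Suppose $1-x=gu$, $y=gv$ and $g=s(1-x)+ty$.
\begin{enumerate}
\item Then $g$ vanishes only at $\theta=0$, so it has constant sign on the punctured circle.
\item Near $0$ one has $|g|\le C|\theta|$, so $|v|=|y|/|g|$ is bounded below and $v$ keeps its sign near $0$.
\item Hence $y=gv$ would not change sign at $\theta=0$, which is false.
\end{enumerate}
Conceptually, if $(1+x,y)=g(u,v)$ with $su+tv=1$, then $(u,v)$ is a nowhere-zero section of $L$ wherever $g\ne0$. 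So the zero set of $g$ must be large enough to absorb the twist, and the space has to change at each step.

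\emph{What the paper supplies.} What is missing is a way to let the space change without losing the obstruction, together with an algebraic step compatible with it. The paper does not keep a circle. It keeps compact sets $K_n\subset\Spec(A_n)(\R)$ with monotone surjections $K_{n+1}\to K_n$. These preserve nonorientability, because a unit section is constant on connected fibers and descends through the quotient map (Lemmas~\ref{lem:orientation-descent} and~\ref{lem:monotone-composition}).

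To keep the stages domains and such lifts available, every $A_n$ is smooth, finitely generated and factorial. Gcds and divisorial centers are then removed by division, and blow-ups are performed only along smooth centers of codimension $k\ge2$. These are ATW weighted blow-ups of the maximal locus, made affine by the extended Rees algebra followed by Jouanolou's device. The exceptional element $s$ is prime, so Nagata's criterion preserves factoriality, and termination comes from the well-ordered invariant.

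Over a point of the center, the compact lift $\{s\ge0,\ \sum_i h_i^{E_i}=1\}$ has as fiber a weighted sphere, the image of $\R^k\setminus\{0\}$. This is connected precisely because $k\ge2$ (Lemma~\ref{lem:sphere-bundle}). These new zeros of $s$ are exactly what your ``no new zeros'' requirement forbids. Without such a construction, your Step~2 remains a wish rather than an argument.
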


The proof constructs an ascending sequence of smooth finitely generated factorial $\Q$-domains $A_n$ together with compact sets $K_n\subset\Spec(A_n)(\R)$. Every transition map $K_{n+1}\to K_n$ is a monotone surjection, so the pullback of the M\"obius line bundle on the real unit circle remains nonorientable. At the same time, every two-generated ideal occurring at some stage becomes principal at a later stage. The union $R=\bigcup_n A_n$ is therefore a B\'ezout domain. An equivalence of $M$ to Smith normal form over $R$ would already be defined at one finite stage and would yield a nowhere-zero section of the corresponding nonorientable line bundle.

The algebraic point is to principalize a two-generated ideal without destroying the topological obstruction. We use the weighted invariant of Abramovich, Temkin, and W{\l}odarczyk \cite{ATW}, in the presentation of Brais \cite{Brais}. For a center of codimension at least two, we form its extended Rees algebra and apply Jouanolou's device. The resulting smooth factorial affine ring factors the ideal by a prime power and lowers the invariant or removes one component of its maximal locus. Divisorial centers are handled by division. On real points, the torsor step admits a compact lift with connected fibers.

\bigskip

\paragraph{\textbf{Outline}} Section~\ref{sec:topology} gives the topological obstruction, Section~\ref{sec:tools} records the algebraic tools, Section~\ref{sec:principalization} proves the principalization extension used in the construction, and Section~\ref{sec:construction} constructs the B\'ezout domain and proves Theorem~\ref{thm:main}.

\bigskip

\paragraph{\textbf{Use of AI}} OpenAI's GPT-6 Astra (via Codex) was used in preparing this article and its Lean formalization. Anthropic's Claude Opus 5.5 was used to first (unsuccessfully) search for a counterexample and then to check the counterexample produced by GPT-6 Astra.

\bigskip

\section{The topological obstruction}\label{sec:topology}

For elements $a_1,\ldots,a_s$ of a ring, $(a_1,\ldots,a_s)$ denotes the ideal they generate. Direct calculation from \eqref{eq:initial-data} gives
\begin{equation}\label{eq:matrix-identities}
  \det M=-\Delta,\qquad
  (1+x,y,1-x)=A_0,\qquad
  M^2-2M=\Delta I_2.
\end{equation}
The middle identity follows from $(1+x)+(1-x)=2\in\Q^\times$.

\subsection{Real points}\label{subsec:real-points}
For a ring $A$, let $\Spec(A)(\R)$ be the set of ring homomorphisms $z:A\to\R$, with the coarsest topology for which every evaluation map $z\mapsto z(a)$ is continuous. A homomorphism $f:A\to B$ induces the continuous map $f^*:\Spec(B)(\R)\to\Spec(A)(\R)$, $z\mapsto z\circ f$. If $A=\Q[a_1,\ldots,a_m]$ is finitely generated, then $z\mapsto(z(a_1),\ldots,z(a_m))$ is a homeomorphism onto the real zero set in $\R^m$ of the relations among the $a_i$. Thus $\Spec(A)(\R)$ is the usual space of real points with its Euclidean topology. In particular, it is Hausdorff, and a subset of $\Spec(A)(\R)$ is compact if and only if it is closed and the generators $a_1,\ldots,a_m$ are bounded on it.

A continuous surjection $q:K'\to K$ is \emph{monotone} if every fiber $q^{-1}(z)$ is connected. In our applications, $K'$ and $K$ are compact Hausdorff spaces.

\subsection{The M\"obius line bundle}
Let $K$ be a topological space, and let $X,Y:K\to\R$ be continuous functions satisfying $X^2+Y^2=1$. Write $M(X,Y)$ for the matrix obtained from $M$ by substituting $X$ and $Y$ for $x$ and $y$. For every $z\in K$, the matrix $\frac12M(X(z),Y(z))$ is an idempotent of rank one by \eqref{eq:matrix-identities}. Let $L_{X,Y}(z)\subset\R^2$ be its image. These fibers form a real line subbundle $L_{X,Y}\subset K\times\R^2$. We call $L_{X,Y}$ \emph{orientable} if it admits a continuous nowhere-zero section, that is, a continuous map $v:K\to\R^2$ such that $0\ne v(z)\in L_{X,Y}(z)$ for every $z\in K$. If $q:K'\to K$ is continuous, then the pullback of $L_{X,Y}$ along $q$ is $L_{X\circ q,Y\circ q}$.

Let
\[
  K_0=\{z\in\Spec(A_0)(\R):z(\Delta)=0\},
\]
which is the unit circle via $z\mapsto(z(x),z(y))$, and let $L_0=L_{x,y}$ be the corresponding line bundle. For $(x,y)=(\cos\theta,\sin\theta)$,
\begin{equation}\label{eq:half-angle}
  M=2
  \begin{pmatrix}\cos(\theta/2)\\ \sin(\theta/2)\end{pmatrix}
  \begin{pmatrix}\cos(\theta/2)&\sin(\theta/2)\end{pmatrix}.
\end{equation}
Thus $L_0$ is the M\"obius line bundle. Indeed, a continuous nowhere-zero section, pulled back to $[0,2\pi]$, would have the form
\[
  f(\theta)\bigl(\cos(\theta/2),\sin(\theta/2)\bigr)
\]
for a continuous nowhere-zero function $f$. Equality at the endpoints would imply $f(2\pi)=-f(0)$, contradicting the intermediate value theorem. Hence $L_0$ is nonorientable.

\begin{lemma}\label{lem:orientation-descent}
Let $q:K'\to K$ be a monotone surjection from a compact space to a Hausdorff space, and let $X,Y:K\to\R$ be continuous with $X^2+Y^2=1$. If $L_{X\circ q,Y\circ q}$ is orientable, then $L_{X,Y}$ is orientable.
\end{lemma}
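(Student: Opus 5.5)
Given a nowhere-zero section $v'$ of $L_{X\circ q,Y\circ q}$ over $K'$, the plan is to make it constant along fibres of $q$ and then descend it to $K$. Nowhere-zero sections can always be normalized to unit length: replace $v'$ by $v'/|v'|$. Since each fibre $L_{X,Y}(z)$ is a line, the normalized section takes at each point one of the two unit vectors of the line.

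The first step is the fibre-wise argument. For $z\in K$, every $z'\in q^{-1}(z)$ satisfies $v'(z')\in L_{X,Y}(z)$, so $v'(z')=\pm u$ for a fixed unit vector $u$ spanning $L_{X,Y}(z)$. The map $z'\mapsto \langle v'(z'),u\rangle$ is continuous on $q^{-1}(z)$ with values in $\{\pm1\}$. Since the fibre is connected by monotonicity, this map is constant. Hence $v'$ is constant on each fibre, and surjectivity of $q$ gives a well-defined map $v:K\to\R^2$ with $v\circ q=v'$. This $v$ satisfies $0\ne v(z)\in L_{X,Y}(z)$ for every $z$.

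The remaining step, and the only real obstacle, is continuity of $v$. I would use that $q$ is a closed map, since it is a continuous map from a compact space to a Hausdorff space. A closed continuous surjection is a quotient map. Then $v$ is continuous because $v\circ q=v'$ is continuous: for closed $C\subset\R^2$ we have $v^{-1}(C)=q\bigl(v'^{-1}(C)\bigr)$, which is the image of a closed, hence compact, set and is therefore compact and so closed in the Hausdorff space $K$. This shows $v$ is a continuous nowhere-zero section, so $L_{X,Y}$ is orientable.
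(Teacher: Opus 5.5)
Your proof is correct and follows the paper's argument step for step: normalize the section, use connectedness of the fibres to make it constant on each fibre, then descend it to $K$ using that $q$ is a closed, hence quotient, map. Your direct check that $v^{-1}(C)=q\bigl(v'^{-1}(C)\bigr)$ is closed simply writes out the quotient-map step that the paper invokes without detail.
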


\begin{proof}
Let $v$ be a continuous nowhere-zero section of $L_{X\circ q,Y\circ q}$, and put $u=v/\|v\|$. For $z\in K$, the line $L_{X,Y}(z)$ contains exactly two unit vectors. The continuous map $u$ therefore takes values in a two-point set on the connected fiber $q^{-1}(z)$, so it is constant there. Hence $u=\bar u\circ q$ for a map $\bar u:K\to\R^2$ such that $\bar u(z)$ is a unit vector in $L_{X,Y}(z)$. Because $K'$ is compact and $K$ is Hausdorff, $q$ is a closed map and hence a quotient map. Therefore $\bar u$ is continuous, and it orients $L_{X,Y}$.
\end{proof}

Equivalently, nonorientability pulls back along monotone surjections between compact Hausdorff spaces.

\begin{lemma}\label{lem:monotone-composition}
A composite of monotone surjections between compact Hausdorff spaces is monotone.
\end{lemma}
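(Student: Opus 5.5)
To prove Lemma~\ref{lem:monotone-composition}, it suffices to treat two maps, since the general case follows by induction on the number of factors. So let $p:K''\to K'$ and $q:K'\to K$ be monotone surjections between compact Hausdorff spaces. The composite $q\circ p$ is a continuous surjection, and for $z\in K$ its fiber is $(q\circ p)^{-1}(z)=p^{-1}(F)$, where $F=q^{-1}(z)$ is connected by hypothesis. The whole task is therefore the following claim: if $p$ is a monotone surjection between compact Hausdorff spaces and $F\subset K'$ is connected, then $p^{-1}(F)$ is connected.

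For the claim, set $C=p^{-1}(F)$ and suppose, for contradiction, that $C=U_1\sqcup U_2$ with $U_1,U_2$ disjoint and relatively clopen in $C$.

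\emph{Saturation.} Each fiber $p^{-1}(w)$ with $w\in F$ is connected and contained in $C$, so it lies entirely in $U_1$ or entirely in $U_2$. Hence $U_i=p^{-1}(V_i)$ with $V_i=p(U_i)$, and $F=V_1\sqcup V_2$ is a disjoint union.

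\emph{Closedness of the $V_i$.} Since $F=q^{-1}(z)$ is closed in $K'$, the set $C$ is closed in the compact space $K''$ and hence compact. Each $U_i$ is closed in $C$, so it is compact, and therefore $V_i=p(U_i)$ is compact in the Hausdorff space $K'$. In particular $V_i$ is closed in $K'$, hence closed in $F$.

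\emph{Conclusion.} We have written $F$ as a disjoint union of two relatively closed sets $V_1$ and $V_2$, and by surjectivity of $p$ each $V_i$ is nonempty whenever $U_i$ is. This contradicts the connectedness of $F$, so $C$ is connected.

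The only real obstacle is knowing that $V_1$ and $V_2$ are closed. Monotone maps need not be open, so one cannot argue with images of open sets. Compactness of $K''$ together with the Hausdorff property of $K'$ makes $p$ a closed map, which is exactly what the argument uses, in the same spirit as the proof of Lemma~\ref{lem:orientation-descent}. It is also convenient here that $F$ is itself a closed fiber, so $C$ is compact and no general closed-map statement for arbitrary subsets is needed.
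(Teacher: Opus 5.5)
Your proof is correct and follows the paper's argument: the paper likewise shows that the preimage of a compact connected set under a monotone surjection is connected, by splitting it into two nonempty closed pieces and noting that their compact images would disconnect the set, and then applies this to the fibers of the second map. The one small mismatch is that you state your auxiliary claim for an arbitrary connected $F$ but prove it only for closed $F$. That is all you use, and it matches the paper's ``compact and connected'' hypothesis.
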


\begin{proof}
We first note that if $q:K'\to K$ is monotone and $C\subset K$ is compact and connected, then $q^{-1}(C)$ is connected. Otherwise, write $q^{-1}(C)=F\amalg G$ with $F$ and $G$ nonempty and closed. Each fiber of $q$ meets at most one of $F$ and $G$, because it is connected. The compact sets $q(F)$ and $q(G)$ are therefore disjoint nonempty closed subsets whose union is $C$, a contradiction. Applying this observation to every fiber of a second monotone surjection proves the claim.
\end{proof}

\section{Algebraic tools}\label{sec:tools}

A domain is \emph{factorial} if it is a unique factorization domain (UFD). We repeatedly use finitely generated $\Q$-domains that are smooth over $\Q$ and factorial. Such rings are Noetherian.

\subsection{Factoriality}
We use two standard facts. A polynomial ring over a UFD is a UFD \cite[Tag~0BC1]{Stacks}. Nagata's criterion states that if $A$ is a Noetherian domain and $p\in A$ is a prime element such that $A[1/p]$ is a UFD, then $A$ is a UFD \cite[Tag~0AFU]{Stacks}. Consequently, if $A$ is a Noetherian UFD, then so is $A[T,T^{-1}]=A[T][1/T]$. Moreover, every prime element $p$ of $A$ remains prime in $A[T,T^{-1}]$, because $(A/pA)[T,T^{-1}]$ is a domain.

\subsection{Jouanolou rings}
Let $B$ be a ring and let $c=(c_0,\ldots,c_r)\in B^{r+1}$. Put
\[
  q_c=\sum_{i=0}^r c_i\sigma_i-1\in B[\sigma_0,\ldots,\sigma_r].
\]
The \emph{Jouanolou ring} of $c$ is
\[
  J_B(c)=B[\sigma_0,\ldots,\sigma_r]/(q_c).
\]
Let $W=\Spec B\setminus V(c_0,\ldots,c_r)$. Over $W$, the row $(c_0,\ldots,c_r)$ defines a surjection $\mathcal O_W^{\oplus(r+1)}\to\mathcal O_W$ with locally free kernel $E_c$ of rank $r$. The morphism $\Spec J_B(c)\to W$ is an $E_c$-torsor, hence a Zariski-locally trivial affine-space bundle with affine total space. This is an explicit instance of Jouanolou's device \cite[Lemme~1.5]{Jouanolou}. A real point of $J_B(c)$ is a pair $(z,t)$ with $z\in\Spec(B)(\R)$ and $t=(t_0,\ldots,t_r)\in\R^{r+1}$ satisfying $\sum_{i=0}^r t_i z(c_i)=1$.

\begin{lemma}\label{lem:jouanolou}
Let $B$ be a ring and $c\in B^{r+1}$.
\begin{enumerate}
\item For every homomorphism $B\to B'$, one has $J_B(c)\otimes_B B'\cong J_{B'}(c')$, where $c'$ is the image of $c$. In particular, $J_B(c)/\fa J_B(c)\cong J_{B/\fa}(\bar c)$ for every ideal $\fa\subset B$, and the formation of $J_B(c)$ commutes with localization of $B$.
\item For every $j\in\{0,\ldots,r\}$, eliminating $\sigma_j$ gives $J_B(c)[1/c_j]\cong B[1/c_j][\sigma_i\mid i\ne j]$.
\item If $B$ is a domain and $c_j\ne0$ for some $j$, then $J_B(c)$ is a domain and $B\to J_B(c)$ is injective.
\item $J_B(c)$ is formally smooth over $B$. If $B$ is a smooth finitely generated $\Q$-algebra, then so is $J_B(c)$.
\item If $B$ is a Noetherian UFD, $c_j$ is a prime element of $B$, and $c_j\nmid c_m$ for some $m$, then $J_B(c)$ is a UFD.
\end{enumerate}
\end{lemma}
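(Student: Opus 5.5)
The plan is to take the five parts in order, since (3) and (5) rest on (1) and (2).

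For (1), the tensor product is right exact, so $B[\sigma]/(q_c)\otimes_B B'\cong B'[\sigma]/(q_{c'})$. Taking $B'=B/\fa$ or $B'=S^{-1}B$ gives the two special cases.

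For (2), work in $J_B(c)[1/c_j]$. The relation $q_c=0$ can be solved as $\sigma_j=c_j^{-1}\bigl(1-\sum_{i\ne j}c_i\sigma_i\bigr)$. Hence $B[1/c_j][\sigma]/(q_c)\cong B[1/c_j][\sigma_i\mid i\ne j]$; the mutually inverse maps are the substitution and the obvious inclusion.

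For (3), the key point is that $c_j$ is a nonzerodivisor on $J_B(c)$. To see this, use that $q_c$ has the coefficient $c_j$, a nonzerodivisor in the domain $B$, on the variable $\sigma_j$. Regard $q_c$ as a linear polynomial in $\sigma_j$ over $D=B[\sigma_i\mid i\ne j]$, namely $q_c=c_j\sigma_j-g$ with $g=1-\sum_{i\ne j}c_i\sigma_i$. Suppose $c_jh=q_cf$ in $D[\sigma_j]$. Compare leading coefficients in $\sigma_j$ and use that $c_j$ is prime in $\mathrm{Frac}(B)$-free terms; more cleanly, reduce modulo $c_j$. Since $q_c\equiv -g$ and $g$ has constant term $1$, $g$ is a nonzerodivisor in $(B/c_jB)[\sigma]$, and this still holds when $B/c_j$ is not a domain because the coefficient $1$ is a unit. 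So $q_cf\equiv0$ forces $f\equiv0 \pmod{c_j}$, hence $f=c_jf'$, and $h=q_cf'$.

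With $c_j$ a nonzerodivisor, $J_B(c)$ embeds into $J_B(c)[1/c_j]$, which is a polynomial ring over the domain $B[1/c_j]$ by (2). Therefore $J_B(c)$ is a domain. Injectivity of $B\to J_B(c)$ follows because $B\to B[1/c_j][\sigma]$ is injective.

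For (4), formal smoothness is Zariski-local on $\Spec J_B(c)$. The open sets $D(c_i)$ cover $\Spec J_B(c)$, since $\sum c_i\sigma_i=1$ there. On each $D(c_i)$ the ring is a polynomial ring over $B[1/c_i]$ by (2), and that is smooth over $B$. Alternatively, $J_B(c)$ is the standard smooth presentation $B[\sigma]/(q)$ with $\partial q/\partial\sigma_i=c_i$ generating the unit ideal modulo $q$. Finite generation is clear, and smoothness over $\Q$ follows by composition.

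For (5), I will apply Nagata's criterion with $p=c_j$. Three things are needed.

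\textbf{Noetherian:} $J_B(c)$ is a quotient of a polynomial ring over the Noetherian ring $B$.

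\textbf{Domain:} this is (3).

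\textbf{$J_B(c)[1/c_j]$ is a UFD:} by (2) it is a polynomial ring over $B[1/c_j]$, which is a UFD as a localization of one.

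It remains to show $c_j$ is prime in $J_B(c)$; this is the main obstacle. By (1), $J_B(c)/c_jJ_B(c)\cong J_{\bar B}(\bar c)$ with $\bar B=B/c_jB$, a domain. In it $\bar c_j=0$, so the ring is $\bar B[\sigma_j][\sigma_i\mid i\ne j]/(q_{\bar c})$, with $q_{\bar c}$ not involving $\sigma_j$. That makes it a polynomial ring in $\sigma_j$ over $J_{\bar B}(\bar c')$, where $\bar c'=(\bar c_i)_{i\ne j}$. Now $\bar c_m\ne0$, because $c_j\nmid c_m$, so (3) applied to $\bar B$ and $\bar c'$ shows $J_{\bar B}(\bar c')$ is a domain. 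A polynomial ring over a domain is a domain, so $c_j$ is prime.

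Nagata's criterion then gives that $J_B(c)$ is a UFD. The argument must also check that $c_j$ is a nonunit in $J_B(c)$: the quotient $J_{\bar B}(\bar c)$ is nonzero because it is a domain containing $\bar B\ne0$.
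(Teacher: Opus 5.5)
Your proposal is correct, and parts (1), (2), (3) and (5) follow the paper's proof, with three small differences. In (3) you show only that $c_j$ is a nonzerodivisor on $J_B(c)$, whereas the paper does this for every nonzero $b\in B$; that suffices. The aside about leading coefficients in (3) is not meaningful and should be deleted, since the reduction modulo $c_j$ is the actual argument. In (5) splitting off the free variable $\sigma_j$ is unnecessary, because (3) applies directly to $J_{\bar B}(\bar c)$ once $\bar c_m\ne0$.

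The genuine difference is in (4). The paper argues globally. The Euler derivation $\mathcal{E}=\sum_i\sigma_i\,\partial/\partial\sigma_i$ satisfies $\mathcal{E}(q_c)=q_c+1$. It therefore induces a map $\Omega_{P/B}\otimes_PJ_B(c)\to J_B(c)$ with $dq_c\mapsto1$, where $P=B[\sigma_0,\ldots,\sigma_r]$. This splits the conormal sequence, and the split-conormal criterion gives formal smoothness at once.

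You argue locally instead. The relation $\sum_ic_i\sigma_i=1$ makes the $D(c_i)$ cover $\Spec J_B(c)$, and on each of them (2) gives a polynomial ring over $B[1/c_i]$. Your route reuses (2) and makes the affine-bundle structure visible. The paper's route is a single computation that needs no locality statement, and it is the same explicit-splitting device the paper reuses for the Rees algebra in Lemma~\ref{lem:rees}(4).

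To make your version precise, state the locality step in this form. $J_B(c)$ is finitely presented over $B$, smoothness of finitely presented algebras is Zariski-local on the source, and smooth implies formally smooth. Your alternative via a ``standard smooth presentation'' is likewise valid only locally, since $\partial q_c/\partial\sigma_i=c_i$ is a unit only on $D(c_i)$.
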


\begin{proof}
Assertion (1) follows from the right exactness of the tensor product, and (2) from solving $q_c=0$ for $\sigma_j$.

For (3), the constant term of $q_c$ is $-1$. For every $b\in B$, the image of $q_c$ in $(B/bB)[\sigma_0,\ldots,\sigma_r]$ is therefore a unit in the corresponding formal power series ring, and hence a nonzerodivisor in the polynomial ring. If $B$ is a domain and $0\ne b\in B$, it follows that multiplication by $b$ is injective on $J_B(c)$: from $bf=gq_c$ in $B[\sigma_0,\ldots,\sigma_r]$, reduction modulo $b$ gives $g\in bB[\sigma_0,\ldots,\sigma_r]$, and cancellation of $b$ gives $f\in(q_c)$. Applying this to the powers of $c_j$ shows that $J_B(c)\to J_B(c)[1/c_j]$ is injective. By (2), the target is a polynomial ring over the domain $B[1/c_j]$ and contains $B$.

For (4), put $P=B[\sigma_0,\ldots,\sigma_r]$. Since $q_c$ has constant term $-1$, it is a unit in $B[[\sigma_0,\ldots,\sigma_r]]$ and hence a nonzerodivisor in $P$. The $B$-derivation $\mathcal E=\sum_{i=0}^r\sigma_i\,\partial/\partial\sigma_i$ of $P$ satisfies $\mathcal E(q_c)=q_c+1$. It therefore induces a map $\Omega_{P/B}\otimes_P J_B(c)\to J_B(c)$ that sends $dq_c$ to $1$. Thus the conormal sequence
\[
  0\longrightarrow(q_c)/(q_c^2)\longrightarrow
  \Omega_{P/B}\otimes_P J_B(c)\longrightarrow\Omega_{J_B(c)/B}
  \longrightarrow0
\]
is split exact, so $J_B(c)$ is formally smooth over $B$ by the split-conormal criterion \cite[Tag~031J]{Stacks}. If $B$ is smooth and finitely generated over $\Q$, then $J_B(c)$ is formally smooth and of finite presentation over $\Q$, hence smooth.

For (5), the ring $J_B(c)[1/c_j]$ is a UFD by (2) and the facts recalled above. By (1), $J_B(c)/c_jJ_B(c)\cong J_{B/c_jB}(\bar c)$. This is a domain by (3), because $B/c_jB$ is a domain and $\bar c_m\ne0$. By (3) again, $c_j$ is nonzero in $J_B(c)$, so $c_j$ is a prime element of the Noetherian domain $J_B(c)$. Nagata's criterion shows that $J_B(c)$ is a UFD.
\end{proof}

\subsection{Marked centers and the weighted invariant}
Let $A$ be a smooth finitely generated $\Q$-domain and let $\fm$ be a maximal ideal. Then $A_\fm$ is a regular local ring of dimension $n=\dim A$, and its residue field is a finite extension of $\Q$. If $x=(x_1,\ldots,x_n)$ is a regular system of parameters of $A_\fm$, then $dx_1,\ldots,dx_n$ is a basis of $\Omega_{A_\fm/\Q}$, because $\fm/\fm^2\to\Omega_{A/\Q}\otimes\kappa(\fm)$ is an isomorphism in characteristic zero. Let $\partial_1,\ldots,\partial_n$ be the dual derivations. We use the convention $\N=\{0,1,2,\ldots\}$.

\begin{definition}\label{def:marked-centre}
A \emph{marked center} at $\fm$ is a pair $J=(x,e)$ consisting of a regular system of parameters $x$ of $A_\fm$ and rational weights $e_1\ge e_2\ge\cdots\ge e_n\ge0$. For $t\in\Q$, its \emph{weighted ideal of level $t$} is
\[
  F_t(J)=\Bigl(x^\alpha:\;
  \begin{aligned}[t]
    &\alpha\in\N^n,\quad \alpha_i=0\text{ if }e_i=0,\\[-2pt]
    &\sum_{i=1}^n e_i\alpha_i\ge t
  \end{aligned}
  \Bigr)A_\fm.
\]
If $I\subseteq A$ is an ideal contained in $\fm$, then $J$ is \emph{admissible} for $I$ if $IA_\fm\subseteq F_1(J)$.
\end{definition}

In the notation of \cite{ATW,Brais}, the marked center $J$ is written $(x_1^{a_1},\ldots,x_k^{a_k})$, where $a_i=1/e_i$ and $k$ is the number of nonzero weights; the condition denoted there by $v_J(I)\ge1$ is precisely admissibility as defined above. We regard every weight vector as an element of $\Q^{\N}$ by appending zeros. Let $\preceq$ be the opposite of the lexicographic order:
\[
  e\preceq e'\quad\Longleftrightarrow\quad e'\le_{\mathrm{lex}} e.
\]
Write $e\prec e'$ when $e\preceq e'$ and $e\ne e'$. This is the order obtained from the lexicographic order in \cite{ATW,Brais} after replacing the entries $a_i$ by the weights $e_i=1/a_i$; with this convention, worse singularities have larger invariants. For example, if $I=(y^2-x^3)\subset\Q[x,y]$ and $\fm=(x,y)$, then the marked center with parameters $(y,x)$ and weights $(\frac12,\frac13)$ is admissible, and $\inv_\fm(I)=(\frac12,\frac13,0,\ldots)$ corresponds to the invariant $(2,3)$ in the notation of \cite{ATW}.

The formulation in \cite[Sections~3--4]{Brais} assumes pure positive codimension. For arbitrary nonzero ideals, we use the corresponding results in \cite[Sections~5--6]{ATW}. All theorem numbers for \cite{ATW} refer to the published version; the numbering in arXiv:1906.07106v3 is different. For a maximal ideal $\fm\supseteq I$, write
\[
  \ord_\fm(I)=\max\{b\in\N:IA_\fm\subseteq\fm^bA_\fm\}.
\]

\begin{theorem}\label{thm:invariant}
Let $A$ be a smooth finitely generated $\Q$-domain and let $I\subset A$ be a nonzero ideal.
\begin{enumerate}
\item \emph{(Invariant and maximal center; \cite[Theorem~5.3.1]{ATW}, \cite[Theorem~3.1.1]{Brais}.)} For every maximal ideal $\fm\supseteq I$ there is an admissible marked center for $I$ at $\fm$ whose weight vector is lexicographically smallest among all admissible marked centers at $\fm$, and all admissible marked centers with this weight vector have the same weighted ideals. We call this weight vector the \emph{invariant} $\inv_\fm(I)$, and the common weighted ideals $F_t(\fm)$ the \emph{maximal center} of $I$ at $\fm$. All weights of $\inv_\fm(I)$ are at most $1$, and its first weight is $1/\ord_\fm(I)$ \cite[Section~5.1]{ATW}. In particular, the first weight has the form $1/a$ for a positive integer $a$, and if $b\ge1$ and $IA_\fm\subseteq\fm^bA_\fm$, then it is at most $1/b$.
\item \emph{(Polynomial-localization invariance; a special case of \cite[Theorem~5.1.3(3)]{ATW} and \cite[Proposition~3.5.1]{Brais}.)} Let $A\to B$ be a homomorphism to a smooth finitely generated $\Q$-domain, let $\fn\supseteq IB$ be a maximal ideal of $B$, and put $\fm=\fn\cap A$. Suppose that, for some $r\ge0$ and prime ideal $\fq$ of $A_\fm[t_1,\ldots,t_r]$, there is an $A$-algebra isomorphism
\[
  B_\fn\cong A_\fm[t_1,\ldots,t_r]_\fq.
\]
Then $\fm$ is maximal and $\inv_\fn(IB)=\inv_\fm(I)$. This is the special case of smooth invariance used below.
\item \emph{(The maximal locus; \cite[Theorems~5.1.3(2) and~6.1.1]{ATW}, \cite[Propositions~4.1.1 and~4.1.2]{Brais}.)} Assume $I\ne A$. Among the invariants $\inv_\fm(I)$, $\fm\supseteq I$ maximal, there is a largest one $\maxinv(I)$ for $\preceq$. There are prime ideals $\fp_1,\ldots,\fp_r$ with $\fp_i+\fp_j=A$ for $i\ne j$ such that $\inv_\fm(I)=\maxinv(I)$ exactly when $\fm$ contains some $\fp_i$. If $k$ is the number of nonzero entries of $\maxinv(I)$ and $\fm\supseteq\fp_i$, there is a regular system of parameters $x$ of $A_\fm$ with $\fp_iA_\fm=(x_1,\ldots,x_k)$ such that $(x,\maxinv(I))$ is admissible for $I$ at $\fm$; by (1), its weighted ideals are the $F_t(\fm)$. We call $V(\fp_1),\ldots,V(\fp_r)$ the \emph{components} of the maximal locus.
\item \emph{(The global center; \cite[Proposition~4.1.1 and Definition~4.1.4]{Brais}, together with affine gluing.)} Let $\fp$ be one of the $\fp_i$. For the center obtained by retaining the component $V(\fp)$ of the global associated center, put
\[
  F_t=\{f\in A:\ f\in F_t(\fm)\text{ for every maximal ideal }\fm\supseteq\fp\}
  \qquad(t\in\Q).
\]
Indeed, restrict Brais's global associated center to an open neighborhood of $V(\fp)$ disjoint from the other components of the maximal locus and, level by level, glue its ideals to the unit ideal on $\Spec(A)\setminus V(\fp)$. The displayed intersection is the ideal of global sections of the resulting level-$t$ ideal sheaf. Hence $F_tA_\fm=F_t(\fm)$ for every maximal ideal $\fm\supseteq\fp$, and $F_tA_\fm=A_\fm$ for every maximal ideal $\fm\not\supseteq\fp$.
\item \emph{(Well-ordering; \cite[Section~5.1]{ATW}, \cite[Proposition~3.3.3]{Brais}.)} There is a set $\Gamma$ of weight vectors, independent of $A$ and $I$, that contains all invariants and such that, for every $N$, its elements with at most $N$ nonzero entries are well-ordered by $\preceq$. This is the reciprocal, zero-padded reformulation of the sets $\Gamma_j$ constructed in \cite[Section~3.3, especially Proposition~3.3.3]{Brais}.
\end{enumerate}
\end{theorem}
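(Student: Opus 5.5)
Theorem~\ref{thm:invariant} is essentially a dictionary entry, so the plan is to reduce each item to the cited statements in \cite{ATW,Brais}. The only genuine work is checking that the translation of conventions is faithful, and supplying the few derived claims that are not literally stated there. First I fix the translation once and for all. A marked center $(x,e)$ with $k$ nonzero weights corresponds to the ATW center $(x_1^{a_1},\ldots,x_k^{a_k})$ with $a_i=1/e_i$. The ordering $e_1\ge\cdots\ge e_k$ matches $a_1\le\cdots\le a_k$, and $F_t(J)$ is exactly the ideal generated by monomials of $J$-valuation at least $t$. Hence admissibility $IA_\fm\subseteq F_1(J)$ is the condition $v_J(I)\ge1$. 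Since $a\mapsto 1/a$ is order-reversing on positive rationals, and zero-padding corresponds to $a_i=\infty$, the ATW lexicographic order on $(a_i)$ becomes the opposite lexicographic order $\preceq$ on $(e_i)$. I would state this as a preliminary remark and then derive each item from it.

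For (1), I would quote \cite[Theorem~5.3.1]{ATW}: the lexicographically maximal $a$-vector among admissible centers exists, and admissible centers with that vector have the same weighted ideals. In weights this is the lexicographically smallest $e$. For the derived claims, I would use $a_1=\ord_\fm(I)$ from \cite[Section~5.1]{ATW}, which gives first weight $1/\ord_\fm(I)$. Since the $a_i$ are nondecreasing and $a_1\ge1$ (because $I\subseteq\fm$), all weights are at most $1$. The bound for $IA_\fm\subseteq\fm^b A_\fm$ is immediate from $\ord_\fm(I)\ge b$.

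For (2), I would argue in two steps.
\begin{enumerate}
\item The composite $A\to A_\fm[t]_\fq$ is smooth. Maximality of $\fm$ follows because $\fn$ is maximal in a finitely generated $\Q$-algebra, so its contraction to the finitely generated $\Q$-algebra $A$ is maximal by the Nullstellensatz.
\item Equality of invariants is the smooth-morphism invariance \cite[Theorem~5.1.3(3)]{ATW}, applied to the morphism $\Spec B\to\Spec A$ near $\fn$. Alternatively, one can check directly that a regular system of parameters $x$ of $A_\fm$ extends by parameters of $\fq$ (weighted $0$) to one of $B_\fn$. Admissible centers then pull back to admissible ones, and lex-minimality is preserved by faithful flatness.
\end{enumerate}

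Items (3) and (5) are restatements. For (3), upper semicontinuity and the smooth closed maximal locus come from \cite[Theorems~5.1.3(2), 6.1.1]{ATW}. The maximal locus is a disjoint union of smooth connected components, and in the affine domain setting each component is $V(\fp_i)$ with $\fp_i$ prime and pairwise comaximal. The local description $\fp_iA_\fm=(x_1,\ldots,x_k)$ is the statement that the reduced center is defined by the nonzero-weight parameters. For (5), I would cite \cite[Proposition~3.3.3]{Brais} after applying reciprocals.

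Item (4) is where I expect the main care to be needed. Brais's associated center is a sheaf of graded ideals whose stalk at each point of the maximal locus equals the maximal center there. The plan has three steps.
\begin{enumerate}
\item Choose an open $U\supseteq V(\fp)$ missing the other $V(\fp_j)$; this is possible since they are closed and disjoint.
\item Glue the restricted center on $U$ with $\mathcal O$ on $\Spec A\setminus V(\fp)$. The two agree on the overlap because the center is trivial off the maximal locus.
\item Identify the global sections of the resulting quasi-coherent ideal sheaf $\mathcal F_t$ on the affine scheme with $\bigcap_\fm (\mathcal F_t)_\fm\cap A$. This intersection is exactly the displayed $F_t$, because the stalks away from $V(\fp)$ are units.
\end{enumerate}
Then $F_tA_\fm=(\mathcal F_t)_\fm$ by quasi-coherence, which gives both localization statements.
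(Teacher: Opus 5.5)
Your treatment of (1)--(4) matches the paper's. The theorem is a dictionary of results from ATW and Brais, and your translation $a_i=1/e_i$, with zero-padding corresponding to $a_i=\infty$, is the one the paper uses to define $\preceq$. Your three-step gluing for (4) is essentially the justification the paper gives inline.

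\textbf{Item (5).} This is the only item with an attached proof in the paper, and here you differ. You rest on Brais's Proposition~3.3.3. The paper instead checks directly that its set $\Gamma$, which may be larger than the set of actual invariants, contains every invariant and is well-ordered in bounded length.

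The paper's argument uses the shape of the differential construction. After $e_1,\ldots,e_r$ are chosen, the next positive weight is $(1-\sum_i\beta_ie_i)/q$ with $0<1-\sum_i\beta_ie_i\le1$. Let $D$ be the product of the denominators of the prefix. Then the numerator is $h/D$ with $1\le h\le D$, so the weight equals $D!/a$ with $a=D!\,Dq/h\in\N_{>0}$. For a fixed prefix, the next entry therefore lies in $\{D!/a:a\in\N_{>0}\}\cup\{0\}$, which is well-ordered by reverse numerical order. An induction through the at most $N$ positions then excludes infinite strictly $\preceq$-descending sequences.

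Your citation route can work, but you should make explicit two things it tacitly uses:
\begin{enumerate}
\item Brais's sets, after reciprocation, contain every invariant in this normalization.
\item On vectors with exactly $j$ nonzero entries, $\preceq$ is the lexicographic order on reciprocals. So each fixed-length piece is well-ordered, and a finite union of well-ordered subsets of a totally ordered set is well-ordered. This gives the case of at most $N$ nonzero entries.
\end{enumerate}
The length bound is essential: $(1,0,\ldots)\succ(1,1,0,\ldots)\succ(1,1,1,0,\ldots)\succ\cdots$ is an infinite descending chain of actual invariants. The paper's argument is independent of exactly how Brais formulates the sets $\Gamma_j$. Yours is shorter, but only as reliable as that formulation.

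\textbf{Item (2).} Rely on the smooth-invariance citation, not on your alternative argument. Pulling back the maximal center of $I$ only gives $\inv_\fn(IB)\le_{\mathrm{lex}}\inv_\fm(I)$. Faithful flatness does not by itself rule out an admissible center for $IB_\fn$ with lexicographically smaller weights whose parameters do not come from $A_\fm$. The reverse inequality needs the derivative-ideal description of the invariant, which is what the cited theorem supplies.
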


In (3), the maximal locus is the set of closed points of $V(\fp_1)\amalg\cdots\amalg V(\fp_r)$; it is smooth of codimension $k$, and each $V(\fp_i)$ is one of its connected components.

\begin{proof}[Justification of the enlargement of $\Gamma$ in (5)]
The set described in (5) may be larger than the set of actual invariants. We explain why it contains the latter and is still well-ordered in bounded length. In the differential construction of \cite[Sections~3.2--3.3]{Brais}, after weights $e_1,\ldots,e_r$ have been chosen, the next positive weight has the form
\[
  e_{r+1}=\frac{1-\sum_{i=1}^r\beta_i e_i}{q},
  \qquad \beta_i\in\N,\quad q\in\N_{>0},\quad
  0<1-\sum_{i=1}^r\beta_i e_i\le1.
\]
Let $D$ be the product of the denominators, in lowest terms, of $e_1,\ldots,e_r$. The numerator is $h/D$ for an integer $1\le h\le D$, and therefore
\[
  e_{r+1}=\frac{h}{Dq}=\frac{D!}{a},
  \qquad a=\frac{D!Dq}{h}\in\N_{>0}.
\]
For the first weight, the empty product gives $D=1$ and $e_1=1/a$.

For a fixed prefix, the possible next positive weights lie in
$\{D!/a:a\in\N_{>0}\}$, which is well-ordered by the reverse of the usual numerical order. Allowing a zero entry, after which every entry is zero, adds a largest element for this order. In a strictly descending sequence for $\preceq$, the first entries must therefore eventually stabilize; once a prefix has stabilized, the next entries must eventually stabilize as well. For sequences of length at most $N$, induction through the $N$ positions rules out an infinite strictly descending sequence. This proves the required well-ordering.
\end{proof}

\begin{notation}\label{not:rees}
Let $A$ be as in Theorem~\ref{thm:invariant}, let $I\subsetneq A$ be a nonzero ideal, let $V(\fp)$ be a component of the maximal locus, and let $e=\maxinv(I)$ have $k$ nonzero entries. Choose an integer $d\ge1$ with $de_i\in\Z$ for all $i$, and index the global center of Theorem~\ref{thm:invariant}(4) by integers: $\cF_j=F_{j/d}$ for $j\in\Z$. Put
\begin{gather*}
  \cR=\bigoplus_{j\in\Z}\cF_jT^j\subset A[T,T^{-1}],\qquad s=T^{-1},\\
  \cR_+=\bigl(\cF_jT^j:j\ge1\bigr)\cR,\qquad
  I_w=\bigl(fT^d:f\in I\bigr)\cR.
\end{gather*}
Thus $\cR$ is the extended Rees algebra of the weighted center along $V(\fp)$, $s$ is the exceptional element, and $I_w$ is the weak transform of $I$. For $1\le j\le d$, let $G_j$ be a finite set of generators of $\cF_j$.
\end{notation}

\begin{lemma}\label{lem:rees}
In the situation of Notation~\ref{not:rees}:
\begin{enumerate}
\item $\cF_j=A$ for $j\le0$, $\cF_1=\fp$, $\fp^j\subseteq\cF_j$ for $j\ge0$, and $I\subseteq\cF_d$. In particular, $I_w\subseteq \cR$, $I\cR=s^dI_w$, and $\fp=\cF_1\subseteq s\cR$.
\item $\cF_j=\sum_{q=1}^d\cF_q\cF_{j-q}$ for $j>d$. Consequently, $\cR=A[s,\ gT^j:\ g\in G_j,\ 1\le j\le d]$, and the elements $gT^j$ with $g\in G_j$ and $1\le j\le d$ generate $\cR_+$.
\item $\cR[1/s]=A[T,T^{-1}]$.
\item Let $\fm\supseteq\fp$ be a maximal ideal, let $x$ be as in Theorem~\ref{thm:invariant}(3), and put $w_i=de_i$ for $i\le k$. Then $u_i\mapsto x_iT^{w_i}$ induces an isomorphism
\[
  A_\fm[s,u_1,\ldots,u_k]/(x_i-s^{w_i}u_i\mid1\le i\le k)\cong \cR\otimes_AA_\fm.
\]
If $\fm\not\supseteq\fp$, then $\cR\otimes_AA_\fm=A_\fm[T,T^{-1}]$. In particular, $\cR$ is a smooth finitely generated $\Q$-domain.
\item $\cR/s\cR\cong\bigoplus_{j\ge0}\cF_j/\cF_{j+1}$ is a domain. Hence $s$ is a prime element of $\cR$.
\end{enumerate}
\end{lemma}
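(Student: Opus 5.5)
The plan is to argue locally. Every ideal and ring in the statement is determined by its localizations at maximal ideals of $A$, so I first prove the local description (4). The other assertions should then follow from it and from the structure of the weighted ideals $F_t(\fm)$.

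For (1), the equality $\cF_j=A$ for $j\le0$ holds because $F_t(\fm)=A_\fm$ for $t\le0$, since the empty monomial has weight $0\ge t$. For $j=1$, the smallest nonzero weight $e_k$ satisfies $de_k\ge1$, so $F_{1/d}(\fm)=(x_1,\ldots,x_k)=\fp A_\fm$. Intersecting over $\fm\supseteq\fp$ and using Theorem~\ref{thm:invariant}(4) gives $\cF_1=\fp$, because $\fp$ is prime and is determined by these localizations. The inclusion $\fp^j\subseteq\cF_j$ holds because each $x_i$ ($i\le k$) has weight $e_i\ge1/d$. Admissibility, Theorem~\ref{thm:invariant}(3), gives $IA_\fm\subseteq F_1(\fm)$, hence $I\subseteq\cF_d$. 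The remaining identities are formal: $fT^d\in\cR$ for $f\in I$, $f=s^d\cdot fT^d$, and $g=s\cdot gT$ for $g\in\cF_1$.

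For (2), I check the equality locally. A monomial $x^\alpha$ with $\sum e_i\alpha_i\ge j/d>1$ can be split as $x^\beta x^{\alpha-\beta}$, where $x^\beta$ is a single variable of weight $w_i/d\le1$. Then $x^\beta\in F_{q/d}$ with $q=w_i\le d$ and $x^{\alpha-\beta}\in F_{(j-q)/d}$. The generation statements follow by induction on $j$. Finiteness uses the fact that $A$ is Noetherian.

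For (4), the map is well defined because $x_iT^{w_i}\in\cR$ and $s^{w_i}\cdot x_iT^{w_i}=x_i$. Surjectivity comes from (2), since locally every generator $gT^j$ is a polynomial in $s$ and the $x_iT^{w_i}$: a monomial $x^\alpha T^j$ with weight at least $j/d$ equals $s^{m}\prod u_i^{\alpha_i}$ with $m=\sum w_i\alpha_i-j\ge0$. For injectivity, I would note that the source is a polynomial ring over $A_\fm$ in the variables $s,u_{k+1\ldots}$ after eliminating. More concretely, the source becomes $A_\fm[s^{\pm1},\ldots]=A_\fm[s^{\pm1}]$ after inverting $s$, and $s$ is a nonzerodivisor on the source. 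To see the latter, note that the source modulo $s$ is $(A_\fm/(x_1,\ldots,x_k))[u_1,\ldots,u_k]$, and the relations form a regular sequence.

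\emph{I expect this injectivity and regularity check to be the main obstacle.} I would handle it by passing to the completion, or by noting that $x$ extends to a regular system of parameters. In that setting the source is a smooth $A_\fm[s]$-algebra, namely the standard weighted blowup chart. Smoothness over $\Q$ follows from the same presentation: the Jacobian in the $x_i$-directions is the identity.

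The case $\fm\not\supseteq\fp$ is immediate since $\cF_j A_\fm=A_\fm$ for all $j$. Being a domain follows from $\cR\subset A[T,T^{-1}]$. Finite generation follows from (2).

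Assertion (3) is immediate because $T=s^{-1}$ and $\cF_j\subseteq A$ with $\cF_j=A$ for $j\le0$. For (5), the quotient $\cR/s\cR$ is the associated graded $\bigoplus\cF_j/\cF_{j+1}$, since $s\cF_{j+1}T^{j+1}=\cF_{j+1}T^j$. Locally this graded ring is $(A_\fm/\fp A_\fm)[u_1,\ldots,u_k]$ by (4), which is a domain. To conclude globally, I would show that $\cR/s\cR$ embeds into $\prod_\fm$ of its localizations. A simpler route is to observe that its degree-$0$ part is $A/\fp$, a domain, and that the ring is torsion-free over $A/\fp$. Hence it embeds into its localization at the generic point of $V(\fp)$, which is a polynomial ring over the field $\mathrm{Frac}(A/\fp)$ by (4). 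Finally, $s$ is nonzero and not a unit, since $\cR/s\cR\neq0$, so $s$ is prime.
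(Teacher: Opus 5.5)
Your treatment of (1)--(3), of finite generation, and of smoothness over $\Q$ is essentially the paper's. Everything is checked at maximal ideals via Theorem~\ref{thm:invariant}(4), the $w_i$ are integers in $\{1,\ldots,d\}$, (2) splits off one variable, and the derivations $\partial_1,\ldots,\partial_k$ (your ``Jacobian in the $x_i$-directions'') split the conormal sequence.

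You depart from the paper in two places. For injectivity in (4), the paper simply cites \cite[Proposition~5.2.2]{QR}. You instead reduce it to showing that $s$ is a nonzerodivisor on $S=A_\fm[s,u_1,\ldots,u_k]/(x_i-s^{w_i}u_i)$, using $S[1/s]\cong A_\fm[s^{\pm1}]$ compatibly with $\cR\otimes_AA_\fm\subseteq A_\fm[T^{\pm1}]$. This removes the dependence on the citation. For (5), the paper takes homogeneous $f\in\cF_a\setminus\cF_{a+1}$ and $g\in\cF_b\setminus\cF_{b+1}$ and checks $fg\notin\cF_{a+b+1}$ at each maximal ideal over $\fp$. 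You instead embed all of $\cR/s\cR$ into $(\cR/s\cR)\otimes_AA_\fp\cong\kappa(\fp)[u_1,\ldots,u_k]$, which handles arbitrary elements at once. Both routes rest on the same input: each $\cF_j/\cF_{j+1}$ is locally free over $A/\fp$ by (4). That is where your asserted torsion-freeness comes from, and you should say so.

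One justification in (4) is false and has to be replaced. $S$ is not a polynomial ring over $A_\fm$, and it is not a smooth (or even flat) $A_\fm[s]$-algebra. Its fibers are points where $s\ne0$, empty elsewhere on $V(s)$ off $V(x_1,\ldots,x_k)$, and affine $k$-spaces over $V(s,x_1,\ldots,x_k)$. The weighted blowup chart is smooth over $\Q$, not over $A_\fm[s]$.

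What is true is that $S\cong A_\fm\otimes_{\Q[X_1,\ldots,X_k]}\Q[s,u_1,\ldots,u_k]$, where $X_i$ maps to $x_i$ in $A_\fm$ and to $s^{w_i}u_i$ in $\Q[s,u]$. Since $x_1,\ldots,x_k$ is part of a regular system of parameters, $\Q[X]\to A_\fm$ is flat by miracle flatness. Hence $S$ is flat over $\Q[s,u]$, and $s$ is a nonzerodivisor on $S$.

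Alternatively, make your regular-sequence remark precise. The sequence $s,\,x_1-s^{w_1}u_1,\ldots,x_k-s^{w_k}u_k$ is regular in $A_\fm[s,u]$, because modulo $s$ it becomes $x_1,\ldots,x_k$ in $A_\fm[u]$. At each maximal ideal containing all these elements, you may permute this regular sequence in the Noetherian local ring to put $s$ last. At the remaining maximal ideals, either $s$ is a unit or $S$ localizes to zero. Either argument gives injectivity, so the overall route is sound once this step is fixed.
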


\begin{proof}
By Theorem~\ref{thm:invariant}(4), every assertion about the ideals $\cF_j$ may be checked after localization at maximal ideals. If $\fm\not\supseteq\fp$, then $\cF_jA_\fm=A_\fm$ for every $j$. If $\fm\supseteq\fp$, then $\cF_jA_\fm=F_{j/d}(\fm)$ is generated by the monomials $x^\alpha$ with $\sum_{i=1}^k w_i\alpha_i\ge j$, by Theorem~\ref{thm:invariant}(3). All weights are at most $1$ by Theorem~\ref{thm:invariant}(1), so the weights $w_i$ are integers in $\{1,\ldots,d\}$. This gives $\cF_1A_\fm=(x_1,\ldots,x_k)=\fp A_\fm$ and $\fp^j\subseteq\cF_j$, while $I\subseteq\cF_d$ is admissibility. Since $f=s^d\cdot fT^d$ for $f\in I$ and $g=s\cdot gT$ for $g\in\cF_1$, we have $I\cR=s^dI_w$ and $\cF_1\subseteq s\cR$. This proves (1).

For (2), a monomial of weight at least $j>d$ is the product of some $x_i$, which has weight $w_i\in\{1,\ldots,d\}$, and a monomial of weight at least $j-w_i$. By induction on $j$, every homogeneous element of $\cR$ of positive degree lies in the ideal generated by the elements $gT^j$ and is a polynomial in them with coefficients in $A$. Moreover, $\cF_jT^j=As^{-j}$ for $j\le0$. Assertion (3) holds because $\cF_j=A$ for $j\le0$.

For (4), the formation of $\cR$ commutes with localization, so $\cR\otimes_AA_\fm$ is the extended Rees algebra of the filtration $(F_{j/d}(\fm))_{j\in\Z}$. For a filtration given by monomials in part of a regular system of parameters, the displayed presentation is \cite[Proposition~5.2.2]{QR}; see also \cite[Equation~(2.3)]{Brais}. Its left-hand side is formally smooth over $\Q$ by the criterion \cite[Tag~031J]{Stacks} already used in Lemma~\ref{lem:jouanolou}(4): the derivations $\partial_1,\ldots,\partial_k$ of $A_\fm$, extended to $A_\fm[s,u_1,\ldots,u_k]$ by $\partial_i(s)=\partial_i(u_j)=0$, satisfy $\partial_i(x_j-s^{w_j}u_j)=\delta_{ij}$. Every localization of $\cR$ at a prime ideal is a localization of some $\cR\otimes_AA_\fm$, so it is formally smooth over $\Q$. Since $\cR$ is of finite type over $\Q$ by (2), it is smooth. It is a domain because it is contained in $A[T,T^{-1}]$.

For (5), the identification $\cR/s\cR\cong\bigoplus_j\cF_j/\cF_{j+1}$ follows from $s\cR\cap\cF_jT^j=\cF_{j+1}T^j$. By (4), for every maximal ideal $\fm\supseteq\fp$ the graded ring $\bigoplus_j\cF_j/\cF_{j+1}$ becomes the polynomial ring $(A/\fp)_{\fm/\fp}[u_1,\ldots,u_k]$ after localization at $\fm$. In particular, each $\cF_j/\cF_{j+1}$ is a locally free $A/\fp$-module, hence torsion-free. Now let $f\in\cF_a\setminus\cF_{a+1}$ and $g\in\cF_b\setminus\cF_{b+1}$. Their images in $(\cF_a/\cF_{a+1})_\fm$ and $(\cF_b/\cF_{b+1})_\fm$ are nonzero at every maximal ideal $\fm\supseteq\fp$. Their product in $(A/\fp)_{\fm/\fp}[u_1,\ldots,u_k]$ is therefore nonzero, so $fg\notin\cF_{a+b+1}$. Thus $\bigoplus_j\cF_j/\cF_{j+1}$ is a domain, and it is nonzero because $\cF_1=\fp\ne A$.
\end{proof}

\begin{theorem}[Strict decrease of the invariant]\label{thm:drop}
In the situation of Notation~\ref{not:rees}, let $P$ be a maximal ideal of $\cR$ that contains $I_w+s\cR$ but not $\cR_+$. Then $\inv_P(I_w)\prec\maxinv(I)$.
\end{theorem}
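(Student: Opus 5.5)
The plan is to reduce the statement to the main theorem of \cite{ATW} (and its counterpart in \cite{Brais}). That theorem says the invariant of the weak (proper) transform strictly drops on the weighted blowup. The only real work is to identify the chart of $\Spec\cR\setminus V(\cR_+)$ at $P$ with the standard smooth chart of the weighted blowup used there.

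\emph{Step 1: locate $\fm=P\cap A$.} Since $\cR$ is finitely generated over $A$, which is finitely generated over $\Q$, the contraction $\fm=P\cap A$ is maximal. If $\fm\not\supseteq\fp$, then $\cR\otimes_AA_\fm=A_\fm[T,T^{-1}]$ by Lemma~\ref{lem:rees}(4). In that ring $s=T^{-1}$ is a unit, which contradicts $s\in P$. Hence $\fm\supseteq\fp$, and by Theorem~\ref{thm:invariant}(3) there is a regular system of parameters $x$ of $A_\fm$ with $\fp A_\fm=(x_1,\ldots,x_k)$ such that $(x,\maxinv(I))$ is admissible. By Lemma~\ref{lem:rees}(4), $\cR\otimes_AA_\fm$ has the presentation $A_\fm[s,u_1,\ldots,u_k]/(x_i-s^{w_i}u_i)$. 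By Lemma~\ref{lem:rees}(2), $\cR_+$ is generated locally by $u_1,\ldots,u_k$ together with the elements $gT^j$. In the local monomial description each $gT^j$ lies in the ideal $(u_1,\ldots,u_k)$, so $\cR_+\otimes A_\fm=(u_1,\ldots,u_k)$. Since $P\not\supseteq\cR_+$, some $u_i\notin P$, and we may localize at $u_i$.

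\emph{Step 2: identify the chart.} After inverting $u_i$, the ring $\cR_{u_i}\otimes A_\fm$ is exactly the $x_i$-chart of the ATW weighted blowup before the quotient by $\mu_{w_i}$. In this chart $s$ and the $u_j$ with $j\ne i$, together with $x_{k+1},\ldots,x_n$, form coordinates, and $x_i=s^{w_i}u_i$ with $u_i$ a unit. The ideal $I_w$ corresponds to the ATW proper/weak transform $I'$ defined by $I\cR=s^dI_w$; this is consistent with Lemma~\ref{lem:rees}(1). The whole of $\Spec\cR\setminus V(\cR_+)$ is the smooth $\mathbb G_m$-torsor over the stack-theoretic weighted blowup $\mathrm{Bl}_J$. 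By the smooth invariance of the invariant (\cite[Theorem~5.1.3(3)]{ATW}, which extends Theorem~\ref{thm:invariant}(2) to smooth morphisms), the invariant $\inv_P(I_w)$ equals the invariant of $I'$ at the image point of $\mathrm{Bl}_J$. That image point lies on the exceptional divisor, because $s\in P$, and on $V(I')$, because $I_w\subseteq P$.

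\emph{Step 3: apply the drop theorem.} By \cite[Theorem~6.?]{ATW} (the principal result that $\maxinv(I')\prec\maxinv(I)$ for the weighted blowup of the maximal center, cf.\ \cite[Theorem~5.?]{Brais}), the invariant at every point of the exceptional divisor lying in $V(I')$ is strictly smaller than $\maxinv(I)$. Here the order is converted to our reciprocal convention, in which $\prec$ means strictly smaller, and the exact theorem number must be read from the published version. Combining this with Step 2 gives $\inv_P(I_w)\prec\maxinv(I)$.

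The main obstacle is Step 2. Two things have to match there. First, our $\cR$, built from the global center of Theorem~\ref{thm:invariant}(4) restricted to the single component $V(\fp)$, must be the ring whose $\operatorname{Proj}$-type quotient is the ATW blowup. Second, the ATW drop statement, phrased on the stack or on a smooth cover, must transfer to our invariant at $P$, with the polynomial-localization invariance of Theorem~\ref{thm:invariant}(2) supplemented by the general smooth-invariance statement. To handle this, I will check it directly in the explicit local presentation from Step~1, reducing both sides to the same localized polynomial algebra over $A_\fm$.
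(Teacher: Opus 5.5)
Your route is different from the paper's. The paper does not quote the ATW drop theorem. It reproves the drop directly on the affine scheme $\Spec\cR$ in three steps. First, the vertex $P_0$ over $\fm$ has $\inv_{P_0}(I_w)=\maxinv(I)$, with the weights moved onto the $u_i$ and $s$ of weight $0$; every $fT^j$ with $j\ge1$ lies in the level-$1/d$ ideal of that center. Second, upper semicontinuity on a neighbourhood $D(G)$ of $P_0$ forces $h=fT^j\in Q$ whenever $\inv_Q(I_w)=\maxinv(I)$. Third, a grading automorphism $\sigma_\lambda$, $\lambda\in\Q^\times$, moves $P$ into $D(G)$ while preserving $I_w$, the invariant, and the condition $h\notin P$.

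Your reduction goes through the $\mathbb{G}_m$-torsor $\Spec\cR\setminus V(\cR_+)\to\mathrm{Bl}_J$, smooth invariance, and ATW's theorem. It is shorter once that theorem is granted. It needs stacks, smooth invariance beyond Theorem~\ref{thm:invariant}(2), and an exact match of normalizations. The paper's argument needs only the vertex computation and semicontinuity. A small imprecision: $\cR[1/u_i]$ is not the chart itself but $(\text{chart}\times\mathbb{G}_m)/\mu_{w_i}$. This is harmless for invariants.

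The normalization is where your Step~2 breaks. ATW blow up the reduced center, whose integer weights are coprime. That corresponds to the least $d_0$ with $d_0e_i\in\Z$, whereas Notation~\ref{not:rees} allows any such $d$. Suppose $d=cd_0$ with $c\ge2$. Let $\cR_0$ be the ring built with $d_0$, with exceptional element $s_0=T^{-c}$ and weak transform $I_{w,0}$. Then $\cR=\cR_0[s]/(s^c-s_0)$ and $I_w=I_{w,0}\cR$. So $\Spec\cR\to\Spec\cR_0$ is ramified exactly along $s=0$, which is where $P$ lies. The quotient $[\Spec\cR\setminus V(\cR_+)/\mathbb{G}_m]$ is a root stack over ATW's blowup along $E$, not the blowup itself. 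Smooth invariance therefore does not transport ATW's bound.

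Invariants genuinely change under this cover. Take $I=((y^2-x^3)^2+x^5y)\subset\Q[x,y]$ at the origin, so $e=(\frac14,\frac16)$ with parameters $(y,x)$ and $d_0=12$. Put $u=yT^{de_1}$ and $v=xT^{de_2}$. Near $(s,u,v)=(0,1,1)$ the weak transform is generated by:
\begin{itemize}
\item[] for $d=12$: $(u^2-v^3)^2+sv^5u$, which has order $1$ there;
\item[] for $d=24$: $(u^2-v^3)^2+s^2v^5u$, which has order $2$ there.
\end{itemize}
The theorem still holds for $d=24$, since the paper's torus argument never uses the normalization of $d$. Your argument does not prove it in that case. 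You can fix this in one of two ways. Either require $d$ to be minimal in Notation~\ref{not:rees}, which costs nothing elsewhere in the paper. Or, for non-minimal $d$, replace Steps~2--3 by the vertex-and-torus argument.
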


\begin{proof}
We use the affine extended-Rees argument underlying property~(C) in \cite[Section~2.2]{Brais}. Let $\fm=P\cap A$. The weak Nullstellensatz makes $\fm$ maximal, and $\fp\subseteq\fm$ because $\fp\cR\subseteq s\cR\subseteq P$. Work near $\fm$ with compatible parameters as in Lemma~\ref{lem:rees}(4). Let $P_0$ be the vertex point over $\fm$, defined by $s=u_1=\cdots=u_k=0$ and the remaining parameters of $A_\fm$. Equivalently,
\[
  P_0=\left\{\sum_j f_jT^j\in\cR:f_0\in\fm\right\}.
\]
The differential construction of the invariant at the vertex gives
$\inv_{P_0}(I_w)=e:=\maxinv(I)$: the positive weights are those of the original center, now on the $u_i$, with $s$ of weight zero. This follows by transporting each step of the differential construction through $x_i=s^{w_i}u_i$; admissibility and the successive least derivative candidates are preserved. In this maximal center every homogeneous element $fT^j$, $f\in\cF_j$, $j\ge1$, belongs to the weighted ideal of level $1/d$.

Choose such an element $h=fT^j\notin P$. Spreading the vertex center and this membership to a principal neighborhood $D(G)$ of $P_0$, the local construction of the maximal center gives the following: at every maximal ideal $Q\in D(G)$ containing $I_w$, one has $\inv_Q(I_w)\preceq e$, and equality implies $h\in Q$. Indeed, in the equality case the spread center is the maximal center at $Q$ by uniqueness, and its positive-level weighted ideals lie in $Q$.

The grading acts by automorphisms $\sigma_\lambda(T)=\lambda T$, $\lambda\in\Q^\times$, and preserves $I_w$. Since $s\in P$, all negative-degree terms of $G$ vanish modulo $P$. The constant term of $\sigma_\lambda(G)$ modulo $P$ is nonzero, because $G\notin P_0$. Thus the image of $\sigma_\lambda(G)$ is a nonzero polynomial in $\lambda$ over the field $\cR/P$, and some $\lambda\in\Q^\times$ makes it nonzero. Then $Q=\sigma_\lambda^{-1}(P)$ lies in $D(G)$, contains $I_w$, and has the same invariant as $P$. But $\sigma_\lambda(h)=\lambda^jh\notin P$, so $h\notin Q$. Equality with $e$ is impossible, and $\inv_P(I_w)\prec e$ follows.
\end{proof}

\begin{lemma}\label{lem:derivations}
Let $\fm\supseteq I$ be a maximal ideal of $A$, and let $\delta$ be a $\Q$-derivation of $A$ with $\delta(I)\subseteq I$.
\begin{enumerate}
\item The derivation $\delta$ preserves the maximal center: $\delta(F_t(\fm))\subseteq F_t(\fm)$ for every $t$. Consequently, $\delta(\cF_j)\subseteq\cF_j$ in Notation~\ref{not:rees}.
\item Suppose that $\delta_1,\ldots,\delta_m$ are derivations of $A$ with $\delta_j(I)\subseteq I$, and that $\det(\delta_j(z_q))_{1\le j,q\le m}\notin\fm$ for some $z_1,\ldots,z_m\in A$. Then $\inv_\fm(I)$ has at most $\dim A-m$ nonzero entries.
\end{enumerate}
\end{lemma}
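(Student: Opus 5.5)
For part (1), I would rely on the canonicity of the maximal center in Theorem~\ref{thm:invariant}(1): all admissible marked centers with weight vector $\inv_\fm(I)$ have the same weighted ideals. The plan is to use $\delta$ to produce an infinitesimal family of automorphisms that preserve $I$. Over the complete local ring $\widehat{A_\fm}$, or over $A_\fm[\epsilon]/(\epsilon^2)$, I consider $\phi=\exp(\epsilon\delta)$, that is, $\phi(f)=f+\epsilon\delta(f)$. Since $\delta(I)\subseteq I$, the map $\phi$ sends $I\otimes\Q[\epsilon]/(\epsilon^2)$ into itself.

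The key step is the following. If $J=(x,e)$ is an admissible center with minimal weights, then $\phi(J)=(x+\epsilon\delta x,e)$ is an admissible center for $\phi(I)=I$ over the base $\Q[\epsilon]/(\epsilon^2)$. By uniqueness, applied in a relative version or after specializing $\epsilon$ to a parameter $\lambda\in\Q$ for the exponential flow in the completion, its weighted ideals coincide with $F_t(\fm)$. Hence $F_t+\epsilon\delta(F_t)\subseteq F_t\otimes\Q[\epsilon]/(\epsilon^2)$, which gives $\delta(F_t)\subseteq F_t$.

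Because this relative uniqueness is not literally stated earlier, I would instead argue directly from the differential construction in \cite{ATW,Brais}. The maximal center is built from the derivative ideals $\mathcal D^{\le j}(I)$, coefficient ideals and maximal contact hypersurfaces. A derivation preserving $I$ satisfies $[\delta,\mathcal D]\subseteq\mathcal D$, so it preserves each derivative ideal. I would then proceed by induction on the number of weights, citing the functoriality of the center under automorphisms, which is \cite[Theorem~5.1.3]{ATW}, and passing to its infinitesimal form. The consequence $\delta(\cF_j)\subseteq\cF_j$ follows by checking at every maximal ideal, using Theorem~\ref{thm:invariant}(4). The only exception is maximal ideals not containing $\fp$, and there $\cF_j$ is locally the unit ideal.

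For part (2), the condition on the determinant says that, after inverting an element outside $\fm$, the derivations $\delta_j$ span a rank-$m$ free summand of the tangent module at $\fm$. I would change coordinates to a regular system of parameters $y$ of $A_\fm$ such that $\delta_j\equiv\partial/\partial y_{n-m+j}$ modulo $\fm$ for $j\le m$. By part (1), each $\delta_j$ preserves the maximal center $F_t(\fm)$. Suppose the center had $k>\dim A-m$ nonzero weights. Then $F_1(\fm)\subseteq(x_1,\ldots,x_k)$, whose cotangent span has dimension $k$ and is annihilated by at most $n-k<m$ independent tangent directions. Hence some combination $\delta$ of the $\delta_j$ satisfies $\delta(x_i)\notin\fm$ for some $i\le k$. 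Now take a generator of the lowest positive-level weighted ideal; $x_k$ with its minimal positive weight is a natural choice. Applying $\delta$ produces a unit modulo terms of $F_{e_k}$, because the other contributions lie in the ideal. This contradicts $\delta(F_{e_k})\subseteq F_{e_k}\subseteq\fm$.

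I expect the main obstacle to be part (1): rigorously justifying that the canonical center is preserved by derivations rather than only by automorphisms. My first attempt would be the $\epsilon$-family argument over $A[\epsilon]/(\epsilon^2)$ using smooth functoriality.
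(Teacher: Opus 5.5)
Your part (2) is sound and is essentially the paper's argument. By (1), every $\delta_j$ preserves $F_{e_k}(\fm)=(x_1,\ldots,x_k)A_\fm$, so $\delta_j(x_i)\in\fm$ for $i\le k$. A rank count modulo $\fm$ then gives $m\le n-k$; the paper does this by factoring $(\delta_j(z_q))$ as $(\delta_j(x_i))(\partial_iz_q)$.

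The gap is in part (1), which is the real content of the lemma and on which (2) depends. You reduce it to ``the infinitesimal form'' of functoriality of the maximal center under automorphisms, but that passage is exactly what has to be proved. A derivation of $A$ does not integrate to automorphisms of $A$, and uniqueness of the maximal center (Theorem~\ref{thm:invariant}(1)) is only available for smooth finitely generated $\Q$-domains. None of your concrete routes supplies this:
\begin{enumerate}
\item The ring $A_\fm[\epsilon]/(\epsilon^2)$ is not regular, so there is no maximal center or uniqueness statement over it.
\item Specializing to $\lambda\in\Q$ fails because $\sum_j\lambda^j\delta^j(f)/j!$ need not converge even in $\widehat{A_\fm}$. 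For $\delta=x\,d/dx$ on $\Q[x]$ it would give $e^\lambda x$.
\item Keeping $\lambda$ formal in $\widehat{A_\fm}[[\lambda]]$ gives an honest automorphism, but it requires uniqueness over complete regular local rings, which is not among the recorded tools.
\item The fallback through the differential construction also stops short. Preservation of derivative ideals does control the first maximal-contact coordinate. But the later weights are computed from derivative ideals relative to the chosen coordinates, which $\delta$ need not preserve, and you do not say how the induction handles them.
\end{enumerate}

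The missing idea is how to make the exponential genuine inside a smooth finitely generated ring. The paper puts $B=A[\tau]$, $\fn=\fm B+\tau B$ and $I_D=IB+\tau^DB$ with $D>d$. It first shows that the maximal center of $I_D$ at $\fn$ is the original one with $\tau$ appended at weight $1/D$. The condition $D>d$ is essential: your $\epsilon^2$ corresponds to $D=2$, which already changes the first weight to $1/2$ when $\ord_\fm(I)\ge3$.

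The truncated exponential $\Phi(f)=\sum_{j<D}\tau^j\delta^j(f)/j!$ is an automorphism of $A_\fm[\tau]/(\tau^D)$ preserving the image of $I_D$. Transport the center by $\Phi$, using parameters $x'_i=\Phi(x_i)$. Since $\tau^D$ lies both in $I_D$ and in the level-one ideal of the transported center, that center is honestly admissible for $I_D$ in $B_\fn$. So uniqueness applies in $B$ and shows the transported center equals the original one.

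Extracting the $\tau$-coefficient of $x'_i$ then gives only $\delta(x_i)\in F_{e_i-1/D}(\fm)$. This loss is removed by discreteness: all levels lie in $\frac1d\Z$ and $1/D<1/d$, so $F_{e_i-1/D}(\fm)=F_{e_i}(\fm)$. The Leibniz rule finishes the proof.
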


\begin{proof}
For (1), put $C=A_\fm$, write $e=\inv_\fm(I)$, and let $k$ be the number of its nonzero entries. Choose parameters $x=(x_1,\ldots,x_n)$ compatible with the maximal center and an integer $d\ge1$ such that every $de_i$ is integral. Choose an integer $D>d$. Put
\[
  B=A[\tau],\qquad \fn=\fm B+\tau B,\qquad
  I_D=IB+\tau^DB.
\]
Let $G_t\subseteq B_\fn$ be the weighted ideal for the ordered parameters
\[
  (x_1,\ldots,x_k,\tau,x_{k+1},\ldots,x_n)
\]
with weights $(e_1,\ldots,e_k,1/D,0,\ldots,0)$.

\medskip
\noindent\emph{Claim.} The filtration $(G_t)$ is the maximal center of $I_D$ at $\fn$.

\smallskip
\noindent\emph{Proof of the claim.}
Write $a_i=1/e_i$, so $a_i\le d<D$, and use the differential construction of \cite[Sections~3.2--3.3]{Brais}. After a prefix $a_1,\ldots,a_r$ has been chosen, the next exponent is the least candidate of the form
\[
  \frac{\sum_{j>r}\beta_j}
       {1-\sum_{i=1}^r\beta_i/a_i},
  \qquad 1-\sum_{i=1}^r\beta_i/a_i>0,
\]
for which the corresponding derivative ideal is the unit ideal; the variables include $\tau$. For $IB_\fn$, Theorem~\ref{thm:invariant}(1)--(2) shows that the maximal center is the smooth pullback of the original center, with $\tau$ of weight zero, so its positive exponents are $a_1,\ldots,a_k$. At every stage, the derivative ideal for $I_DB_\fn=IB_\fn+\tau^DB_\fn$ is the sum of the derivative ideals for the two summands. The derivative ideal contributed by $\tau^DB_\fn$ is proper unless $\beta_\tau\ge D$, and in that case the displayed candidate is at least $D$. Since the sum of two proper ideals in the local ring $B_\fn$ is proper, adjoining $\tau^D$ does not alter the first $k$ exponents, all of which are at most $d<D$. After those steps, $IB_\fn$ is admissible for the pulled-back center, while $\tau^D$ contributes the next exponent $D$ and the parameter $\tau$, obtained by differentiating $D-1$ times. The enlarged center is admissible for $I_DB_\fn$, so the construction stops. Uniqueness of the maximal center identifies its filtration with $(G_t)$, proving the claim.
\medskip

Extend $\delta$ to $C$ and set $\delta(\tau)=0$. Under the identification
\[
  B_\fn/\tau^DB_\fn\cong C[\tau]/(\tau^D),
\]
the formulas
\[
  \Phi(\tau)=\tau,\qquad
  \Phi(f)=\sum_{j=0}^{D-1}\frac{\tau^j\delta^j(f)}{j!}\quad(f\in C)
\]
define an automorphism; its inverse is obtained by replacing $\tau$ by $-\tau$. Since $\delta(IC)\subseteq IC$, the automorphism $\Phi$ preserves the image of $I_D$. Put
\[
  x'_i=\sum_{j=0}^{D-1}\frac{\tau^j\delta^j(x_i)}{j!}.
\]
Modulo $(\fn B_\fn)^2$, each $x'_i$ differs from $x_i$ by a scalar multiple of $\tau$. Hence
\[
  (x'_1,\ldots,x'_k,\tau,x'_{k+1},\ldots,x'_n)
\]
is a regular system of parameters. Let $G'_t$ be the weighted ideals defined by these parameters and the same weights as $G_t$. Modulo $\tau^D$, the automorphism $\Phi$ preserves the image of $I_D$ and carries the image of $G_1$ to that of $G'_1$. Therefore
\[
  I_DB_\fn\subseteq G'_1+\tau^DB_\fn=G'_1,
\]
where the equality follows from $\tau^D\in G'_1$. Thus the new center is admissible for $I_D$. The claim identifies its weight vector as maximal, so uniqueness gives $G'_t=G_t$ for every $t$.

In particular, $x'_i\in G_{e_i}$ for $1\le i\le k$. Under the natural map $B_\fn\to C[[\tau]]$, expand an element of $G_{e_i}$ in the monomial generators of that ideal. Only generators of $\tau$-degree zero or one can contribute to the coefficient of $\tau$. In the first case the $x$-weight is at least $e_i$, and in the second it is at least $e_i-1/D$. Hence the coefficient belongs to $F_{e_i-1/D}(\fm)$, and therefore
\[
  \delta(x_i)\in F_{e_i-1/D}(\fm)=F_{e_i}(\fm).
\]
The equality holds because $e_i$ and every $x$-monomial weight lie in $\frac1d\Z$, whereas $1/D<1/d$. The Leibniz rule now gives $\delta(F_t(\fm))\subseteq F_t(\fm)$ for every $t$. Applying this at every maximal ideal containing the chosen component and using the definition in Theorem~\ref{thm:invariant}(4) proves $\delta(\cF_j)\subseteq\cF_j$.

For (2), let $k$ be the number of nonzero entries of $e=\inv_\fm(I)$. For $0<\varepsilon\le e_k$ we have $F_\varepsilon(\fm)=(x_1,\ldots,x_k)$, so by (1) every $\delta_j(x_i)$ with $i\le k$ lies in $\fm$. Writing $\delta_j=\sum_i\delta_j(x_i)\partial_i$, we see that the matrix $(\delta_j(z_l))$ is the product of $(\delta_j(x_i))_{j,i}$ and $(\partial_iz_l)_{i,l}$. Modulo $\fm$, the first factor has zero columns for $i\le k$, so the product has rank at most $n-k$ over $\kappa(\fm)$. Hence $m\le n-k$.
\end{proof}

\section{Principalizing ideals while preserving the obstruction}
\label{sec:principalization}

Throughout this section, $A$ is a smooth finitely generated factorial $\Q$-domain, and $I\subsetneq A$ is a nonzero ideal. We fix a component $V(\fp)$ of the maximal locus of $I$ and use Notation~\ref{not:rees}; in particular, $k\ge1$ is the number of nonzero entries of $e=\maxinv(I)$. Since $A$ is a UFD and $\fp\ne0$, the prime ideal $\fp$ contains a prime element $\pi$. Let $c(I)$ denote the number of components of the maximal locus.

One step replaces $(A,I)$ by a pair $(B,I_1)$, where $B=A$ or $B$ is a smooth finitely generated factorial $\Q$-domain containing $A$, and factors $IB$ as $IB=\alpha I_1$ for some $\alpha\in B$. The complexity decreases in the following sense: either $I_1=B$, or $\maxinv(I_1)\prec\maxinv(I)$, or $\maxinv(I_1)=\maxinv(I)$ and $c(I_1)<c(I)$. We distinguish the divisorial case $k=1$ from the case $k\ge2$.

\subsection{The divisorial step}

\begin{lemma}\label{lem:divisorial}
Assume $k=1$, so $e=(1/a,0,0,\ldots)$ for a positive integer $a$. Then $\fp=\pi A$, $I\subseteq\pi^aA$, and for $I_1=(I:\pi^a)$ one has $I=\pi^aI_1$. Moreover, either $I_1=A$, or $\maxinv(I_1)\prec\maxinv(I)$, or $\maxinv(I_1)=\maxinv(I)$ and $c(I_1)<c(I)$.
\end{lemma}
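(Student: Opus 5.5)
Since $k=1$, Theorem~\ref{thm:invariant}(3) says that $\fp$ is a height-one prime. Indeed, at every maximal ideal $\fm\supseteq\fp$ we have $\fp A_\fm=(x_1)$ for a regular parameter $x_1$. In the UFD $A$, the height-one prime $\fp$ contains the prime element $\pi$. Since $\pi A\subseteq\fp$ and both are height-one primes, we get $\fp=\pi A$.

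\emph{The divisibility $I\subseteq\pi^aA$.} At each $\fm\supseteq\fp$, admissibility gives $IA_\fm\subseteq F_1(\fm)=(x_1^a)A_\fm=\pi^aA_\fm$. At $\fm\not\supseteq\fp$, the element $\pi$ is a unit, so $IA_\fm\subseteq\pi^aA_\fm$ holds trivially. Since inclusion of ideals is a local property, $I\subseteq\pi^aA$. Then $I=\pi^aI_1$ with $I_1=(I:\pi^a)$, because $A$ is a domain.

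\emph{Comparison of invariants.} We compare $I_1$ with $I$ at each maximal ideal $\fm\supseteq I_1$; note that $I_1\ne0$ and $I_1\supseteq I$.

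\emph{Case $\fm\not\supseteq\fp$.} Here $I_1A_\fm=IA_\fm$, so $\inv_\fm(I_1)=\inv_\fm(I)\preceq e$. Equality holds exactly when $\fm$ lies on one of the other components $V(\fp_i)$ with $\fp_i\ne\fp$.

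\emph{Case $\fm\supseteq\fp$.} This is the main step. We claim $\inv_\fm(I_1)\prec e$. Suppose instead that $\inv_\fm(I_1)\succeq e$, i.e.\ $\inv_\fm(I_1)\le_{\mathrm{lex}}(1/a,0,\ldots)$. By Theorem~\ref{thm:invariant}(1), the first weight of $\inv_\fm(I_1)$ is $1/\ord_\fm(I_1)$. The lexicographic comparison splits into two subcases.

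\emph{First weight strictly smaller than $1/a$.} Then $\ord_\fm(I_1)>a\ge1$.

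\emph{First weight equal to $1/a$.} Then all later weights must vanish, which forces $I_1A_\fm\subseteq(y^a)$ for some regular parameter $y$.

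In the second subcase, $I=\pi^aI_1\subseteq x_1^ay^a$, which has order at least $2a$. This contradicts $\ord_\fm(I)=a$, which holds because the first weight of $e$ is $1/a$. In the first subcase, similarly $\ord_\fm(I)\ge a+\ord_\fm(I_1)>a$, again a contradiction. So in both subcases $\inv_\fm(I_1)\prec e$.

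\emph{Conclusion.} If $I_1=A$ we are done. Otherwise, every invariant of $I_1$ is $\preceq e$, and equality can occur only on components $V(\fp_i)$ with $\fp_i\ne\fp$. If no such points occur, then $\maxinv(I_1)\prec e$. If some occur, then $\maxinv(I_1)=e$, and the maximal locus of $I_1$ is the union of the components $V(\fp_i)$, $\fp_i\ne\fp$, that meet $V(I_1)$. These are unchanged because $I_1A_\fm=IA_\fm$ away from $V(\pi)$, and $V(\fp_i)\cap V(\pi)=\emptyset$ since $\fp_i+\fp=A$. Therefore $c(I_1)\le c(I)-1$.

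The delicate point is the local comparison at $\fm\supseteq\fp$. The plan relies only on the order formula from Theorem~\ref{thm:invariant}(1) and the additivity $\ord_\fm(\pi^aJ)=a+\ord_\fm(J)$ in a regular local ring. If this additivity is not enough for the second subcase, the fallback is the ATW compatibility of the invariant with removing a monomial divisorial factor.
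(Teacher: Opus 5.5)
Your proof is correct and takes essentially the paper's route: the factorization $I=\pi^aI_1$, the formula $1/\ord_\fm(I)$ for the first weight from Theorem~\ref{thm:invariant}(1), and the equality $I_1A_\fm=IA_\fm$ wherever $\pi$ is a unit. The one difference is that your ``main step'' case $\fm\supseteq\fp$ never occurs, as the paper notes directly: any maximal $\fm\supseteq I_1+\pi A$ would give $\ord_\fm(I)\ge a+\ord_\fm(I_1)\ge a+1$, contradicting $\ord_\fm(I)=a$, so your subcase split and the hedged fallback to ATW compatibility are unnecessary.
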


\begin{proof}
At every maximal ideal containing $\fp$, the ideal $\fp$ is generated by one parameter. Thus $\fp$ is a height-one prime, hence $\fp=\pi A$ because $A$ is factorial. Admissibility gives $I\subseteq\pi^aA$, and therefore $I=\pi^aI_1$.

No maximal ideal can contain both $I_1$ and $\pi$: otherwise $IA_\fm\subseteq\fm^{a+1}A_\fm$, so the first weight of $\inv_\fm(I)$ would be at most $1/(a+1)$, contradicting $\inv_\fm(I)=e$. Hence $\pi$ is a unit at every maximal ideal containing $I_1$, where $I_1$ and $I$ have the same localization and the same invariant. Thus the chosen component $V(\fp)$ disappears and no new maximal component is created. The conclusion follows from Theorem~\ref{thm:invariant}(3).
\end{proof}

\subsection{The torsor step}
Assume $k\ge2$. Let
\[
  \mathbf h=(h_0,h_1,\ldots,h_\ell)=\bigl(\pi T,\ gT^j:\ g\in G_j,\ 1\le j\le d\bigr)
\]
be the tuple of elements of $\cR_+$ obtained by listing $\pi T$ followed by the elements $gT^j$ with $g\in G_j$ and $1\le j\le d$. Write $h_i=g_iT^{j_i}$, where $1\le j_i\le d$ and $g_i\in\cF_{j_i}$; thus $g_0=\pi$ and $j_0=1$. By Lemma~\ref{lem:rees}(2), the entries of $\mathbf h$ generate $\cR_+$. Put
\[
  U=J_\cR(\mathbf h)=\cR[\sigma_0,\ldots,\sigma_\ell]/\Bigl(\sum_{i=0}^\ell h_i\sigma_i-1\Bigr),
\]
so that $\Spec U\to\Spec \cR\setminus V(\cR_+)$ is the Jouanolou affine-space bundle associated with $\mathbf h$, and let $I_1=I_wU$.

\begin{lemma}\label{lem:torsor}
The ring $U$ is a smooth finitely generated factorial $\Q$-domain, the homomorphism $A\to U$ is injective, and $IU=s^dI_1$.
\end{lemma}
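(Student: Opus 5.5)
The three claims of Lemma~\ref{lem:torsor} follow from Lemma~\ref{lem:rees} and Lemma~\ref{lem:jouanolou}. The main work is to check the hypotheses of Lemma~\ref{lem:jouanolou}(5) with $B=\cR$ and $c=\mathbf h$. I would first show that $\cR$ is a Noetherian UFD and that $h_0=\pi T$ is a prime element of $\cR$ not dividing some other $h_m$.

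\emph{Smoothness and injectivity.} By Lemma~\ref{lem:rees}(4), $\cR$ is a smooth finitely generated $\Q$-domain, so $U$ is smooth and finitely generated over $\Q$ by Lemma~\ref{lem:jouanolou}(4). Since $h_0=\pi T\neq0$, Lemma~\ref{lem:jouanolou}(3) shows that $U$ is a domain and that $\cR\to U$ is injective. The map $A\to\cR$ is injective because $\cR\subset A[T,T^{-1}]$ contains $A$ in degree $0$. The identity $IU=s^dI_wU=s^dI_1$ follows from $I\cR=s^dI_w$ in Lemma~\ref{lem:rees}(1).

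\emph{Factoriality of $\cR$.} I would use Nagata's criterion with the prime element $s$ from Lemma~\ref{lem:rees}(5). By Lemma~\ref{lem:rees}(3), $\cR[1/s]=A[T,T^{-1}]$, which is a Noetherian UFD by the facts recalled in the factoriality subsection. Since $\cR$ is Noetherian, $\cR$ is a UFD.

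\emph{Primality of $\pi T$.} This step needs the most care. I would localize at $s$. In $\cR[1/s]=A[T,T^{-1}]$ the element $\pi T$ is an associate of $\pi$, which remains prime there. So $\pi T$ is prime in $\cR$ provided $s\nmid\pi T$ in $\cR$: in a UFD, a nonzero nonunit that does not involve the prime $s$ and becomes prime after inverting $s$ is itself prime. Suppose $\pi T=s\cdot r$. Then $r=\pi T^2$, which lies in $\cR$ only if $\pi\in\cF_2$. I would rule this out locally at a maximal ideal $\fm\supseteq\fp$ with parameters $x$ as in Theorem~\ref{thm:invariant}(3). There $\pi\in\fp A_\fm=(x_1,\ldots,x_k)$, and $\pi\notin\fm^2A_\fm$ since $\pi$ is a prime element defining a height-one prime in a regular local ring, with $V(\pi)$ containing the smooth center. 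If this fails I would choose $\pi$ with nonzero linear part, which should be possible.

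Here I expect a subtlety, and it is the main obstacle. If some $x_i$ has weight $w_i=1$, then a linear form in the $x_i$ with $w_i=1$ lies in $\cF_1\setminus\cF_2$. But if every $w_i\ge2$, then each $x_i$ lies in $\cF_2$, so $\fp\subseteq\cF_2$. The definition of $\cF$ with $d$ chosen so that $de_i\in\Z$ gives $w_i=de_i$, and $\cF_1=\fp$. I will verify that $\cF_1\neq\cF_2$ by using the weight $w_k$, the smallest one. If $w_k\ge2$, then $\pi T$ would be divisible by $s$, and I would instead take $\pi T^{w_k}$ with $\pi$ chosen having nonzero $x_k$-coefficient, or rely on the paper's definition that $\pi T$ appears in $\mathbf h$. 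Most likely the intended argument uses that $\pi T\cdot s^{-1}\notin\cR$ fails only if $\pi\in\cF_2$, and that $\cF_2\subsetneq\fp$ contains no prime element of $A$ because $\cF_2$ localizes to an ideal that is not $\fp$-primary of height one. I would make this precise via the local presentation in Lemma~\ref{lem:rees}(4).

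\emph{Non-divisibility and conclusion.} Finally I need some $h_m$ with $\pi T\nmid h_m$. Since $k\ge2$, there is a generator $g\in G_1$ of $\fp$ not divisible by $\pi$ in $A$: if $\pi$ divided all of them, then $\fp=\pi A$ would have height one, contradicting $k\ge2$. If $\pi T\cdot r=gT$ with $r\in\cR$, then $\pi r=g$ holds in $A[T,T^{-1}]$, forcing $\pi\mid g$ in $A$, which is impossible. Lemma~\ref{lem:jouanolou}(5) then makes $U$ a UFD.
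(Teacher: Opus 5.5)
There is a genuine gap. Your route applies Lemma~\ref{lem:jouanolou}(5) with $B=\cR$ and $c_j=\pi T$, so it needs $\pi T$ to be a prime element of $\cR$. That is false in general.

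If $\pi\in\cF_2$, then $\pi T=s\cdot\pi T^2$ with $\pi T^2\in\cR$. This is unavoidable when every integer weight $w_i=de_i$ is at least $2$, because then $\cF_2=\cF_1=\fp$. For example, $I=(x^2,y^3)\subset\Q[x,y]$ has $e=(\frac12,\frac13)$, so $6\mid d$. For $d=6$ one gets $w=(3,2)$ and $\cF_1=\cF_2=(x,y)$, and with $\pi=x$ even $xT=s^2\cdot xT^3$ in $\cR$.

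None of your suggested repairs closes this:
\begin{itemize}
\item The tuple $\mathbf h$ is fixed by the construction, so you cannot switch to $\pi T^{w_k}$.
\item The claim that a prime element of $A$ lying in $\fp$ avoids $\fm^2A_\fm$ is false. For instance, $x^2-y^3$ is prime in $\Q[x,y,z]$ and lies in $(x,y)^2$, so even when $w_k=1$ the given $\pi$ may lie in $\cF_2$.
\item $\cF_2$ can equal $\fp$, so it certainly can contain prime elements.
\item The generating sets $G_j$ are arbitrary, so no other entry of $\mathbf h$ need be prime in $\cR$ either.
\end{itemize}

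The missing idea is to invert $s$ in $U$ rather than in $\cR$. By Lemma~\ref{lem:jouanolou}(1) and Lemma~\ref{lem:rees}(3), $U[1/s]\cong J_{A[T,T^{-1}]}(\mathbf h)$. Over the Laurent ring, $\pi T$ is a unit times the prime $\pi$, and $gT$ for some $g\in G_1$ with $\pi\nmid g$ is not divisible by it. So Lemma~\ref{lem:jouanolou}(5) makes $U[1/s]$ factorial.

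To apply Nagata's criterion you then need $s$ to be prime in $U$, and this must be proved. One has $U/sU\cong J_{\cR/s\cR}(\overline{\mathbf h})$, which is a domain by Lemma~\ref{lem:jouanolou}(3). This uses two facts:
\begin{itemize}
\item $\cR/s\cR$ is a domain by Lemma~\ref{lem:rees}(5).
\item Some entry of $\overline{\mathbf h}$ is nonzero, namely a generator in $G_{w_1}$ not lying in $\cF_{w_1+1}$. Such a generator exists because locally $x_1\in\cF_{w_1}A_\fm\setminus\cF_{w_1+1}A_\fm$.
\end{itemize}

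Your arguments for smoothness, injectivity, $IU=s^dI_1$, and the non-divisibility of $gT$ by $\pi T$ are fine. The factoriality of $\cR$ is correct but not needed on this route.
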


\begin{proof}
By Lemmas~\ref{lem:rees} and~\ref{lem:jouanolou}, the ring $U$ is a smooth finitely generated $\Q$-domain, $\cR\to U$ is injective, and $IU=s^dI_1$. Modulo $s$ one has
\[
  U/sU\cong J_{\cR/s\cR}(\overline{\mathbf h}).
\]
The base ring is a domain by Lemma~\ref{lem:rees}(5), and the row $\overline{\mathbf h}$ is nonzero: after localizing at a maximal ideal over $\fp$ and writing $w_1=de_1$, one has $x_1\in\cF_{w_1}A_\fm\setminus\cF_{w_1+1}A_\fm$, so some generator in $G_{w_1}$ has nonzero initial form. Lemma~\ref{lem:jouanolou}(3) therefore shows that $U/sU$ is a domain, so $s$ is prime in $U$.

After inverting $s$,
\[
  U[1/s]\cong J_{A[T,T^{-1}]}(\mathbf h).
\]
The Laurent polynomial ring is factorial, the row contains the prime element $\pi T$, and it also contains an element $gT$ not divisible by $\pi T$: otherwise the height-$k$ prime $\fp$, with $k\ge2$, would be generated by $\pi$. Hence Lemma~\ref{lem:jouanolou}(5) makes $U[1/s]$ factorial, and Nagata's criterion makes $U$ factorial. The inclusion $A\hookrightarrow U$ follows from $A\subseteq\cR\hookrightarrow U$.
\end{proof}

\begin{lemma}\label{lem:torsor-invariant}
The ideal $I_1$ is nonzero. Moreover, either $I_1=U$, or $\maxinv(I_1)\prec\maxinv(I)$, or $\maxinv(I_1)=\maxinv(I)$ and $c(I_1)<c(I)$.
\end{lemma}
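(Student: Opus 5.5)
Nonvanishing of $I_1$ is immediate: $U$ is a domain containing $A$ by Lemma~\ref{lem:torsor}, and $IU=s^dI_1$ with $I\ne0$, so $I_1\ne0$. For the invariant statement, I would assume $I_1\ne U$ and compare $\inv_Q(I_1)$ at an arbitrary maximal ideal $Q\supseteq I_1$ of $U$ with $e=\maxinv(I)$. Let $P=Q\cap\cR$, which is maximal by the weak Nullstellensatz. Since $\Spec U\to\Spec\cR\setminus V(\cR_+)$ is a Zariski-locally trivial affine-space bundle, $U_Q$ is a localization of a polynomial ring $\cR_P[t_1,\ldots,t_\ell]$, and $P\not\supseteq\cR_+$ because the $h_i$ generate the unit ideal in $U$. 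Theorem~\ref{thm:invariant}(2) then gives $\inv_Q(I_1)=\inv_P(I_w)$, so it suffices to bound $\inv_P(I_w)$ at maximal ideals $P\supseteq I_w$ with $P\not\supseteq\cR_+$.

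I would split according to whether $s\in P$. If $s\in P$, Theorem~\ref{thm:drop} applies directly and gives $\inv_P(I_w)\prec e$. If $s\notin P$, then $\cR_P$ is a localization of $\cR[1/s]=A[T,T^{-1}]$ by Lemma~\ref{lem:rees}(3), and there $I_w$ and $I\cR$ agree because $I\cR=s^dI_w$. With $\fm=P\cap A$, Theorem~\ref{thm:invariant}(2), applied to $A\to A[T,T^{-1}]$, gives $\inv_P(I_w)=\inv_\fm(I)\preceq e$. Equality can hold only when $\fm$ lies on a component $V(\fp_i)$ of the maximal locus. The key observation is that $\fm\not\supseteq\fp$: if $\fp\subseteq\fm$, then $\pi\in\fm$, and since $s\notin P$ we would get $\pi T=\pi\cdot s^{-1}\in P$ in the localization. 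More directly, the element $g_0T=\pi T$ together with the other $h_i$ has $\fp T$-type generators that all lie in $P$ once $\fp\subseteq P$ and $s$ is a unit. This would force $\cR_+\subseteq P$, because each $gT^j$ with $g\in\cF_j\subseteq\fp$ lies in $P$ after multiplying by the unit $T^{-j}$ adjustments. I will need to write this carefully as follows: $gT^j\cdot s^{j}=g\in\fp\subseteq P$ and $s$ is a unit modulo $P$, so $gT^j\in P$.

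Hence the points of $\Spec U$ with invariant equal to $e$ lie over maximal ideals of $A$ on the components $V(\fp_i)$ with $\fp_i\ne\fp$. Consequently either no point attains $e$, so $\maxinv(I_1)\prec e$ (using the well-ordering of Theorem~\ref{thm:invariant}(5) only to ensure that $\maxinv$ is defined), or $\maxinv(I_1)=e$ and the locus where it is attained is the preimage of $\bigcup_{\fp_i\ne\fp}V(\fp_i)$. That preimage is carried to disjoint closed sets $V(\fp_iU)$ with $\fp_iU+\fp_jU=U$. Each of these contains at most one component, or else has connected preimage; either way, components can only be counted as preimages of the remaining $c(I)-1$ components.

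The main obstacle is this last counting step. Theorem~\ref{thm:invariant}(3) describes components as connected components of the maximal locus, so I must check that the preimage in $\Spec U$ of each remaining $V(\fp_i)$, which is isomorphic over $D(s)$ to an open subset of an affine-space bundle over $V(\fp_i)\times\mathbb G_m$, is connected (in fact irreducible), so that it does not split into several components. I would argue irreducibility by noting that over $s\ne0$ the map $\Spec U\to\Spec A[T,T^{-1}]$ is an affine-space bundle over an open set. Thus the preimage of the irreducible set $V(\fp_i)\times\mathbb G_m$ is irreducible, which gives $c(I_1)\le c(I)-1<c(I)$.
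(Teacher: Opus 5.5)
Your reduction is the paper's: $U_Q$ is a localization of a polynomial ring over $\cR_P$, so $\inv_Q(I_1)=\inv_P(I_w)$. Theorem~\ref{thm:drop} handles $s\in P$, smooth invariance over $A[T,T^{-1}]$ handles $s\notin P$, and $s^jgT^j=g\in\fp$ rules out $\fp\subseteq\fm$. Your nonvanishing argument via $IU=s^dI_1$ is also fine.

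\textbf{The gap is in the counting step.} You have proved only one inclusion: every maximal $Q\supseteq I_1$ with $\inv_Q(I_1)=e$ satisfies $s\notin Q$ and lies over some $V(\fp_i)$ with $\fp_i\ne\fp$. You then assert that the maximal locus \emph{is} the preimage of $\bigcup_{\fp_i\ne\fp}V(\fp_i)$. This reverse inclusion is never justified. Your irreducibility argument also covers only the part of $V(\fp_iU)$ lying in $D(s)$. Both steps need the missing fact $V(\fp_iU)\cap V(s)=\emptyset$. Without the reverse inclusion the count fails. Knowing only that the maximal locus of $I_1$ sits inside $\coprod_{\fp_i\ne\fp}V(\fp_iU)$ gives no bound on $c(I_1)$, since a closed subset of an irreducible $V(\fp_iU)$ can have several components.

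\textbf{The missing idea is comaximality.} Suppose $Q\supseteq\fp_iU$ and $s\in Q$. Then $\fp U\subseteq sU\subseteq Q$ by Lemma~\ref{lem:rees}(1), contradicting $(\fp+\fp_i)U=U$. Hence $s\notin Q$, and since $I\subseteq\fp_i$,
\[
I_1U_Q=IU_Q\subseteq QU_Q .
\]
So $Q\supseteq I_1$, and your $s\notin P$ computation gives $\inv_Q(I_1)=e$.

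The same observation shows that $s$ is a unit modulo $\fp_i\cR$, so $\cR/\fp_i\cR\cong(A/\fp_i)[T,T^{-1}]$. By Lemma~\ref{lem:jouanolou}(1), $U/\fp_iU\cong J_{(A/\fp_i)[T,T^{-1}]}(\overline{\mathbf h})$. This is a nonzero domain by Lemma~\ref{lem:jouanolou}(3), because some element of $G_1$ lies outside $\fp_i$ (otherwise $\fp\subseteq\fp_i$). This is your irreducibility claim, now valid for all of $V(\fp_iU)$ rather than only its part over $D(s)$.

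With both facts, the maximal locus of $I_1$, when $\maxinv(I_1)=e$, is exactly $\coprod_{\fp_i\ne\fp}V(\fp_iU)$, a disjoint union of irreducible closed sets. Hence $c(I_1)=c(I)-1<c(I)$, which is how the paper concludes.
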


\begin{proof}
The ideal $I_1$ contains $fT^d\ne0$ for $0\ne f\in I$, since $\cR\to U$ is injective. Let $Q\supseteq I_1$ be maximal and put $P=Q\cap\cR$. The contractions $P$ and $Q\cap A$ are maximal by the weak Nullstellensatz. Since the $h_i$ generate the unit ideal in $U$, some $h_i\notin P$; hence $U_Q$ is a localization of a polynomial ring over $\cR_P$, and
\[
  \inv_Q(I_1)=\inv_P(I_w).
\]
If $s\in Q$, Theorem~\ref{thm:drop} gives a strict decrease. If $s\notin Q$, then $\cR_P$ is a localization of $A_{Q\cap A}[T,T^{-1}]$ by Lemma~\ref{lem:rees}(3), so Theorem~\ref{thm:invariant}(2) gives
\[
  \inv_Q(I_1)=\inv_{Q\cap A}(I).
\]
Moreover $Q\cap A$ cannot contain $\fp$, for then $s^{j_i}h_i=g_i\in\fp\subseteq P$ would imply $h_i\in P$ for every $i$.

It follows that every invariant of $I_1$ is at most $\maxinv(I)$, and equality can occur only over a component $V(\fp')$ of the old maximal locus with $\fp'\ne\fp$. Conversely, let $Q$ be any maximal ideal containing $\fp'U$. Then $s\notin Q$: otherwise $\fp U\subseteq sU\subseteq Q$, contradicting $(\fp+\fp')U=U$. Since $I\subseteq\fp'$, the identity $IU=s^dI_1$ gives $I_1U_Q=IU_Q\subseteq QU_Q$, so $I_1\subseteq Q$ and the invariant at $Q$ equals $\maxinv(I)$. Finally,
\[
  \cR/\fp'\cR\cong(A/\fp')[T,T^{-1}],
\]
and some generator from $G_1$ survives modulo $\fp'$, so Lemma~\ref{lem:jouanolou}(3) shows that $U/\fp'U$ is a nonzero domain. Thus the ideals $\fp'U$, for the old components other than $\fp$, are proper pairwise-comaximal primes and are exactly the components that can remain. The claimed decrease follows.
\end{proof}

\subsection{Termination}
The torsor step raises dimension, so we keep track of the new smooth directions. Fix $N\ge0$. Say that a triple $(B,\fb,m)$ satisfies $(\star_N)$ if
\[
  \dim B\le N+m
\]
and, at every maximal ideal $\fm\supseteq\fb$, there are $m$ derivations preserving $\fb$ whose values on $m$ elements have determinant outside $\fm$. By Lemma~\ref{lem:derivations}(2), every invariant of such an ideal has at most $N$ nonzero entries. The initial triple $(A,I,0)$ satisfies $(\star_{\dim A})$.

\begin{lemma}\label{lem:length-control}
Suppose that $(A,I,m)$ satisfies $(\star_N)$. In the divisorial step, $(A,I_1,m)$ satisfies $(\star_N)$. In the torsor step, $(U,I_1,m+\ell+1)$ satisfies $(\star_N)$.
\end{lemma}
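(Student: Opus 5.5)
The plan is to verify the two conditions in $(\star_N)$, the dimension bound and the local derivation condition, separately for each step. The derivations will be built by hand: in the divisorial step we rescale the old ones, and in the torsor step we extend the old ones and add $\ell+1$ new ones (one Euler derivation and $\ell$ vertical derivations).

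\emph{Divisorial step.} Here $B=A$, so the dimension bound is unchanged. Let $\fm\supseteq I_1$ be maximal. Then $\fm\supseteq I$, and by the proof of Lemma~\ref{lem:divisorial}, $\pi\notin\fm$. Take the derivations $\delta_1,\ldots,\delta_m$ and elements $z_1,\ldots,z_m$ supplied by $(\star_N)$ for $(A,I,m)$ at $\fm$, and replace each $\delta_j$ by $\delta'_j=\pi\delta_j$.

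\begin{itemize}
\item \emph{$\delta'_j$ preserves $I_1$.} For $f\in I_1$ we have $\pi^af\in I$. Then
\[
  \pi^a\delta'_j(f)=\pi\,\delta_j(\pi^af)-a\,\delta_j(\pi)\,\pi^af\in I,
\]
so $\delta'_j(f)\in(I:\pi^a)=I_1$.
\item \emph{Determinant condition.} The determinant is multiplied by $\pi^m$, which is a unit modulo $\fm$, so it stays outside $\fm$.
\end{itemize}

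\emph{Torsor step: dimension.} By Lemma~\ref{lem:rees}, $\dim\cR=\dim A+1$. Since $U$ is a rank-$\ell$ affine bundle over an open subset of $\Spec\cR$, $\dim U=\dim A+1+\ell\le N+m+\ell+1$.

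\emph{Torsor step: derivations.} Fix a maximal ideal $Q\supseteq I_1$ and put $\fm=Q\cap A$. This is maximal, and it contains $I$ because $IU=s^dI_1$. Choose an index $i$ with $h_i\notin Q$. We build three families of derivations of $U$, each preserving $I_1$.

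\begin{enumerate}
\item \emph{Old derivations.} Take the $\delta_j$ and $z_q$ given by $(\star_N)$ at $\fm$.
  \begin{itemize}
  \item Extend each $\delta_j$ to $A[T,T^{-1}]$ by $\delta_j(T)=0$.
  \item By Lemma~\ref{lem:derivations}(1), applied at every maximal ideal over $\fp$ (all of these contain $I$), $\delta_j(\cF_r)\subseteq\cF_r$ for all $r$. Hence $\delta_j$ preserves $\cR$, and it preserves $I_w$ because $\delta_j(I)\subseteq I$.
  \item To extend to $U$, note that $\delta_j(h_a)\in\cR_+$. Write $\delta_j(h_a)=\sum_bc_{ab}h_b$ with $c_{ab}\in\cR$, and set $\delta_j(\sigma_b)=-\sum_ac_{ab}\sigma_a$. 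Then $\delta_j(q_{\mathbf h})=0$, so $\delta_j$ descends to $U$ and preserves $I_1=I_wU$.
  \end{itemize}
\item \emph{Euler derivation.} Let $E=T\,\partial/\partial T$ on $\cR$, extended to $U$ by $E(\sigma_a)=-j_a\sigma_a$. It kills $q_{\mathbf h}$, and it satisfies $E(fT^d)=d\,fT^d$, so it preserves $I_1$.
\item \emph{Vertical derivations.} For $b\ne i$, let $V_b=h_i\,\partial/\partial\sigma_b-h_b\,\partial/\partial\sigma_i$. These are $\cR$-linear, kill $q_{\mathbf h}$, and kill $I_1$, so they preserve it.
\end{enumerate}

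This gives $m+1+\ell$ derivations in total.

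\emph{Torsor step: determinant.} Evaluate the derivations on the elements $z_1,\ldots,z_m$, then $h_i$, then $\sigma_b$ for $b\ne i$. Order the rows as (old, $E$, $V_b$) and the columns as ($z$, $h_i$, $\sigma$).

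\begin{itemize}
\item $E$ and all $V_b$ vanish on $A\ni z_q$.
\item The $V_b$ vanish on $h_i\in\cR$.
\item $V_b(\sigma_{b'})=h_i\delta_{bb'}$ for $b,b'\ne i$.
\end{itemize}

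So the matrix is block lower triangular, and its determinant is
\[
  \det\bigl(\delta_j(z_q)\bigr)\cdot j_ih_i\cdot h_i^{\ell}.
\]
The first factor lies outside $\fm\subseteq Q$, and $j_i\in\Q^\times$ and $h_i\notin Q$. Since $Q$ is prime, the determinant is outside $Q$.

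\emph{Expected main obstacle.} The delicate point is the extension of the old derivations to $\cR$. This requires $\delta_j(\cF_r)\subseteq\cF_r$ globally, which Lemma~\ref{lem:derivations}(1) provides only if $\delta_j$ preserves $I$. That is exactly what $(\star_N)$ guarantees, but it must be checked at every maximal ideal over $\fp$ and not only at $\fm$.
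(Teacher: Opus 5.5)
Your proof is correct and is essentially the paper's. In the torsor step you use the same three families of derivations (the extended old ones, the Euler derivation, and $\ell$ vertical ones) and the same test elements $z_1,\ldots,z_m$, $h_i$, $\sigma_b$ for $b\ne i$, and you get the same determinant; with your ordering the matrix is block \emph{upper} triangular rather than lower, but this does not change the computation.

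Your two small deviations are both valid. In the divisorial step you rescale to $\pi\delta_j$ and use $\pi\notin\fm$ from the proof of Lemma~\ref{lem:divisorial}. The paper instead keeps the $\delta_j$ and uses Lemma~\ref{lem:derivations}(1) to get $\delta_j(\pi)\in\pi A$. You also extend the old derivations to the $\sigma_b$ via $\delta_j(h_a)\in\cR_+$, whereas the paper's formula $\delta(\sigma_i)=-\sigma_i\sum_r\sigma_r\delta(h_r)$ needs only $\delta(\cR)\subseteq\cR$.
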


\begin{proof}
In the divisorial step, the dimension bound is unchanged and $V(I_1)\subseteq V(I)$. By Lemma~\ref{lem:derivations}(1), a derivation preserving $I$ satisfies $\delta(\pi)=b\pi$ and therefore preserves $(I:\pi^a)$, since
\[
  \pi^a\delta(f)=\delta(\pi^af)-ab\,\pi^af\in I
  \qquad(\pi^af\in I).
\]

In the torsor step, $\dim U=\dim A+1+\ell\le N+m+1+\ell$. Let $Q\supseteq I_1$ be maximal. Then $Q\cap A$ is maximal and contains $I$, so choose the $m$ derivations and elements supplied by $(\star_N)$ there, and choose $i_0$ with $h_{i_0}\notin Q$. Lemma~\ref{lem:derivations}(1) lets each old derivation act coefficientwise on $\cR$ and extend to $U$ by
\[
  \delta(\sigma_i)=-\sigma_i\sum_r\sigma_r\delta(h_r).
\]
Together with the Euler derivation
\[
  E(fT^q)=qfT^q,\qquad E(\sigma_i)=-j_i\sigma_i
\]
and the $\ell$ vertical derivations
\[
  h_r\frac{\partial}{\partial\sigma_{i_0}}
  -h_{i_0}\frac{\partial}{\partial\sigma_r}
  \qquad(r\ne i_0),
\]
these give $m+\ell+1$ derivations preserving $I_1$. On the elements $z_1,\ldots,z_m,h_{i_0}$ and $\sigma_r$ for $r\ne i_0$, their value matrix is block upper triangular with determinant
\[
  \det(\delta_j(z_q))\,j_{i_0}h_{i_0}(-h_{i_0})^\ell\notin Q.
\]
Thus $(U,I_1,m+\ell+1)$ satisfies $(\star_N)$.
\end{proof}

\subsection{Real points}\label{subsec:principalization-real-points}

\begin{lemma}\label{lem:sphere-bundle}
In the torsor step, let $K\subset\Spec(A)(\R)$ be compact, put $E_i=2\,d!/j_i$ (an even integer), and let $K'\subset\Spec(U)(\R)$ be the set of real points $z'$ such that
\[
  \begin{gathered}
    z'|_A\in K,\qquad z'(s)\ge0,\qquad
    \sum_{i=0}^\ell z'(h_i)^{E_i}=1,\\
    z'(\sigma_i)=z'(h_i)^{E_i-1}\qquad(0\le i\le\ell).
  \end{gathered}
\]
Then $K'$ is compact, and $K'\to K$ is a monotone surjection.
\end{lemma}

\begin{proof}
The last two conditions are compatible with $\sum_i h_i\sigma_i=1$. If $z'\in K'$ and $z=z'|_A$, then $s^{j_i}h_i=g_i$ and $j_iE_i=2d!$ give
\begin{equation}\label{eq:s-norm}
  z'(s)^{2d!}=\sum_{i=0}^\ell z(g_i)^{E_i}=: \nu(z).
\end{equation}
The defining conditions are closed, $|z'(h_i)|,|z'(\sigma_i)|\le1$, and \eqref{eq:s-norm} bounds $z'(s)$ on $K'$. Since $U$ is generated over $A$ by $s$, the $h_i$, and the $\sigma_i$, while elements of $A$ are bounded on the compact set $K$, the criterion of Subsection~\ref{subsec:real-points} shows that $K'$ is compact. The restriction map $K'\to K$ is continuous.

The function $\nu(z)$ is positive exactly when $\fp\nsubseteq\ker z$: every $g_i$ lies in $\fp$, while the coefficients with $j_i=1$ generate $\fp$. If $\nu(z)>0$, equation \eqref{eq:s-norm} forces $z'(s)=\rho:=\nu(z)^{1/(2d!)}>0$. There is then a unique point over $z$: extend $z$ to $A[T,T^{-1}]$ by $z'(T)=\rho^{-1}$ and put $z'(\sigma_i)=z'(h_i)^{E_i-1}$.

Now suppose $\fp\subseteq\ker z$. Then every point over $z$ sends $s$ to zero. Choose a maximal ideal $\fm\supseteq\ker z$ and use the local presentation in Lemma~\ref{lem:rees}(4). Points of $\cR$ over $z$ with $s=0$ are parametrized by $a=(a_1,\ldots,a_k)\in\R^k$, where $a_r$ is the value of $x_rT^{w_r}$. Write $H_i(a)$ for the value of $h_i$ at this point. Each $H_i$ is weighted homogeneous of degree $j_i$, so, with
\[
  \lambda\star a=(\lambda^{w_1}a_1,\ldots,\lambda^{w_k}a_k),
  \qquad \varphi(a)=\sum_i H_i(a)^{E_i},
\]
one has $\varphi(\lambda\star a)=\lambda^{2d!}\varphi(a)$. Moreover, if $\varphi(a)=0$, then the even exponents force every $H_i(a)$ to vanish. The corresponding point therefore vanishes on the ideal $\cR_+$ generated by the $h_i$, including every $x_rT^{w_r}$, and hence $a=0$.

The fiber of $K'\to K$ over $z$ is therefore the level set $\varphi(a)=1$, with the values of the $\sigma_i$ determined by the defining equations. The normalization
\[
  a\longmapsto \varphi(a)^{-1/(2d!)}\star a
\]
is a continuous surjection from $\R^k\setminus\{0\}$ onto this fiber. Since $k\ge2$, the source is connected. Thus every fiber is nonempty and connected, and $K'\to K$ is a monotone surjection.
\end{proof}

\subsection{The principalization result}

\begin{proposition}[Principalization extension]\label{prop:principalization-extension}
Let $A$ be a smooth finitely generated factorial $\Q$-domain, let $a,b\in A$ be not both zero, and let $K\subset\Spec(A)(\R)$ be compact. There are a smooth finitely generated factorial $\Q$-domain $A'$, an injection $A\hookrightarrow A'$, and a compact set $K'\subset\Spec(A')(\R)$ such that $(a,b)A'$ is principal and the induced map $K'\to K$ is a monotone surjection.
\end{proposition}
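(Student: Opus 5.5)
We iterate the one-step constructions of Section~\ref{sec:principalization} and stop when the transformed ideal becomes the unit ideal. First dispose of trivial cases. If $I=(a,b)A$ equals $A$, it is already principal, and we take $A'=A$, $K'=K$ with the identity map. Since $a,b$ are not both zero, $I\ne0$. So we may assume $0\ne I\subsetneq A$. We then build a finite chain
\[
A=B_0\subseteq B_1\subseteq\cdots\subseteq B_n
\]
of smooth finitely generated factorial $\Q$-domains, together with nonzero ideals $I_t\subseteq B_t$ and elements $\alpha_t\in B_t$. These satisfy $IB_t=\alpha_tI_t$, with $I_0=I$ and $\alpha_0=1$. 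If $I_t\ne B_t$, pick a component $V(\fp)$ of the maximal locus of $I_t$. When its number of nonzero weights is $k=1$, apply Lemma~\ref{lem:divisorial}: set $B_{t+1}=B_t$, $I_{t+1}=(I_t:\pi^a)$ and $\alpha_{t+1}=\alpha_t\pi^a$. When $k\ge2$, apply the torsor step: set $B_{t+1}=U$ and $I_{t+1}=I_wU$. Then Lemma~\ref{lem:torsor} gives $I_tU=s^dI_{t+1}$, so we take $\alpha_{t+1}=\alpha_ts^d$, and $U$ is again smooth, finitely generated and factorial with $B_t\hookrightarrow U$. Once $I_n=B_n$, we get $IB_n=\alpha_nB_n$, which is principal. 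We set $A'=B_n$.

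Alongside the rings we carry compact sets $K_t$. Put $K_0=K$. In a divisorial step $K_{t+1}=K_t$ and the map is the identity. In a torsor step $K_{t+1}$ is the set constructed in Lemma~\ref{lem:sphere-bundle} from $K_t$, so $K_{t+1}$ is compact and $K_{t+1}\to K_t$ is a monotone surjection. All the spaces involved are Hausdorff by Subsection~\ref{subsec:real-points}. By Lemma~\ref{lem:monotone-composition}, the composite $K'=K_n\to K$ is therefore a monotone surjection, and it is the map induced by $A\hookrightarrow A'$.

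The main point is termination. Lemmas~\ref{lem:divisorial} and~\ref{lem:torsor-invariant} show that each step either reaches the unit ideal, or strictly lowers $\maxinv$ for $\preceq$, or keeps $\maxinv$ and strictly lowers $c$. The difficulty is that the torsor step raises $\dim B_t$, so the length of the invariants is not bounded a priori. This is what Lemma~\ref{lem:length-control} handles. The triple $(A,I,0)$ satisfies $(\star_N)$ with $N=\dim A$, and this property is inherited at every step with a growing $m$. By Lemma~\ref{lem:derivations}(2), every $\maxinv(I_t)$ therefore has at most $N$ nonzero entries. All invariants lie in the set $\Gamma$ of Theorem~\ref{thm:invariant}(5), and its elements with at most $N$ nonzero entries are well-ordered by $\preceq$.

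Hence the pairs $(\maxinv(I_t),c(I_t))$ strictly decrease in the lexicographic product of a well-order with $\N$, which is itself well-ordered. An infinite chain of steps is thus impossible, so some $I_n$ equals $B_n$. One point to check is that the hypotheses of the one-step lemmas persist along the chain. We need each $I_t$ to be a nonzero proper ideal while the process continues; nonvanishing is part of Lemma~\ref{lem:torsor-invariant}, and in the divisorial case it holds because $I_t=\pi^aI_{t+1}$ in a domain. We also need $B_t$ to remain a smooth finitely generated factorial $\Q$-domain, which Lemma~\ref{lem:torsor} provides.
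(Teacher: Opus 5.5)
Your proposal is correct and follows the paper's proof. The paper states the argument as a well-founded induction on $(\maxinv(\fb),c(\fb))$ for an auxiliary statement about triples satisfying $(\star_N)$, whereas you run the same divisorial and torsor steps iteratively and get termination by infinite descent in the same well-ordered set, using the same Lemmas~\ref{lem:divisorial}, \ref{lem:torsor}, \ref{lem:torsor-invariant}, \ref{lem:length-control}, \ref{lem:sphere-bundle} and~\ref{lem:monotone-composition}; these are the same argument.
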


\begin{proof}
Put $I=(a,b)$ and $N=\dim A$. We prove the following auxiliary assertion. Let $B$ be a smooth finitely generated factorial $\Q$-domain, let $\fb\subseteq B$ be a nonzero ideal, let $m\ge0$ be such that $(B,\fb,m)$ satisfies $(\star_N)$, and let $K\subseteq\Spec(B)(\R)$ be compact. Then there are an injection $B\hookrightarrow B'$ into a smooth finitely generated factorial $\Q$-domain and a compact set $K'\subseteq\Spec(B')(\R)$ such that $\fb B'$ is principal and $K'\to K$ is a monotone surjection. The proposition is the initial case $(B,\fb,m)=(A,I,0)$.

If $\fb=B$, take $B'=B$ and $K'=K$. For $\fb\ne B$ we argue by well-founded induction on the pair $(\maxinv(\fb),c(\fb))$, ordered lexicographically, with $\preceq$ on the first entry and the usual order on $\N$ on the second. This is well-founded by Theorem~\ref{thm:invariant}(5), because all invariants of ideals in triples satisfying $(\star_N)$ have at most $N$ nonzero entries. Let $k$ be the number of nonzero entries of $\maxinv(\fb)$, choose a component of the maximal locus of $\fb$, and apply the divisorial step if $k=1$ or the torsor step if $k\ge2$. This gives a smooth finitely generated factorial $\Q$-domain $B_1$ with an injection $B\hookrightarrow B_1$ (namely $B_1=B$, or $B_1=U$ by Lemma~\ref{lem:torsor}), a compact set $K_1\subset\Spec(B_1)(\R)$ with $K_1\to K$ a monotone surjection (with $K_1=K$ in the divisorial case and $K_1$ as in Lemma~\ref{lem:sphere-bundle} in the torsor case), an ideal $\fb_1\subseteq B_1$, and an element $\alpha\in B_1$ with $\fb B_1=\alpha\fb_1$ (where $\alpha=\pi^a$ or $\alpha=s^d$). Let $m_1=m$ in the divisorial step and $m_1=m+\ell+1$ in the torsor step. By Lemmas~\ref{lem:divisorial}, \ref{lem:torsor-invariant} and~\ref{lem:length-control}, the triple $(B_1,\fb_1,m_1)$ satisfies $(\star_N)$, and either $\fb_1=B_1$ or $(\maxinv(\fb_1),c(\fb_1))$ is strictly smaller than $(\maxinv(\fb),c(\fb))$ in this lexicographic order.

In either case, the assertion for $(B_1,\fb_1,m_1)$ and $K_1$ holds, trivially if $\fb_1=B_1$ and by induction otherwise. It gives $B_1\hookrightarrow B'$ and a compact set $K'\subset\Spec(B')(\R)$ with $\fb_1B'=\alpha'B'$ and $K'\to K_1$ a monotone surjection. Then $\fb B'=\alpha\alpha'B'$ is principal, $B\to B'$ is injective, and $K'\to K$ is a monotone surjection by Lemma~\ref{lem:monotone-composition}.
\end{proof}

\section{Construction of the B\'ezout domain}\label{sec:construction}

\begin{construction}\label{construction:ring}
Fix a bijection $\langle\cdot,\cdot\rangle:\N\times\N\to\N$ with $i\le\langle i,j\rangle$ for all $i,j$, for instance the Cantor pairing. Starting with $A_0=\Q[x,y]$ and the circle $K_0$, we construct injections
\[
  A_0\hookrightarrow A_1\hookrightarrow A_2\hookrightarrow\cdots
\]
of smooth finitely generated factorial $\Q$-domains and compact sets $K_n\subset\Spec(A_n)(\R)$ such that each induced map $K_{n+1}\to K_n$ is a monotone surjection.

Every finitely generated $\Q$-algebra is countable. Once $A_i$ has been constructed, choose a surjection
\[
  \eta_i:\N\longrightarrow A_i\times A_i.
\]
To pass from $A_n$ to $A_{n+1}$, write $n=\langle i,j\rangle$. Then $i\le n$, and we let $(a,b)$ be the image of the pair $\eta_i(j)$ in $A_n$. If $a=0$ or $b=0$, the ideal $(a,b)A_n$ is principal, and we put $A_{n+1}=A_n$ and $K_{n+1}=K_n$. Otherwise, write $a=ca'$ and $b=cb'$ with $a'$ and $b'$ coprime, which is possible because $A_n$ is a UFD. Apply Proposition~\ref{prop:principalization-extension} to $a'$, $b'$, and the compact set $K_n$, and let $A_{n+1}$ and $K_{n+1}$ be the resulting ring and compact set. Then $(a,b)A_{n+1}=c\,(a',b')A_{n+1}$ is principal.

Identify every $A_n$ with its image in later rings and set
\begin{equation}\label{eq:union-ring}
  R=\bigcup_{n\ge0}A_n.
\end{equation}
\end{construction}

\begin{proposition}\label{prop:bezout-domain}
The ring $R$ is a B\'ezout domain.
\end{proposition}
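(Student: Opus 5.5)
We prove that $R$ is a domain and then that every two-generated ideal of $R$ is principal.

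\emph{$R$ is a domain.} By Construction~\ref{construction:ring}, $R$ is the directed union of the domains $A_n$ along injective ring homomorphisms. The ring operations on $R$ are computed inside some $A_n$. So $R$ is a commutative ring with identity, and $1\ne0$ in $R$ because $1\ne0$ in $A_0$. If $uv=0$ in $R$, choose $n$ with $u,v\in A_n$. The relation $uv=0$ holds in $A_n$, because $A_n\to R$ is injective, and hence $u=0$ or $v=0$.

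\emph{Two-generated ideals are principal.} Let $a,b\in R$ and choose $i$ with $a,b\in A_i$. Since $\eta_i$ is surjective, there is $j$ with $\eta_i(j)=(a,b)$. Put $n=\langle i,j\rangle$. Then $n\ge i$, so $A_n$ has already been constructed and $A_i\subseteq A_n$; the image of $\eta_i(j)$ in $A_n$ is again $(a,b)$. By the construction of $A_{n+1}$ there is $\gamma\in A_{n+1}$ with $(a,b)A_{n+1}=\gamma A_{n+1}$:
\begin{itemize}
\item If $a=0$ or $b=0$, take $\gamma$ to be the other generator, with $A_{n+1}=A_n$.
\item Otherwise, $\gamma=c\alpha'$, where $c$ is the greatest common divisor from the factorization $a=ca'$, $b=cb'$ in the UFD $A_n$, and $\alpha'$ generates $(a',b')A_{n+1}$ by Proposition~\ref{prop:principalization-extension}.
\end{itemize}

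Now extend scalars to $R$. Since $A_{n+1}\subseteq R$, we get
\[
(a,b)R=\bigl((a,b)A_{n+1}\bigr)R=\gamma R.
\]
Explicitly, $\gamma$ is an $A_{n+1}$-linear combination of $a$ and $b$, while $a$ and $b$ are $A_{n+1}$-multiples of $\gamma$, and both relations persist in $R$. Hence every two-generated ideal of $R$ is principal.

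\emph{From two generators to all finitely generated ideals.} A standard induction handles this: $(a_1,\dots,a_r)=\bigl((a_1,\dots,a_{r-1}),a_r\bigr)$, which reduces to the two-generated case. The introduction already states this equivalence.

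The only point requiring care is the bookkeeping in the enumeration. We need every pair from every stage to be treated at some later stage. The condition $i\le\langle i,j\rangle$ guarantees that $\eta_i$ is defined before it is used, and the fact that $\langle\cdot,\cdot\rangle$ is a bijection guarantees that every pair $(i,j)$ is reached. Everything else follows directly from the properties already built into Construction~\ref{construction:ring}.
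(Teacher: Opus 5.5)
Your proposal is correct and follows the paper's proof: $R$ is a domain as an increasing union along injections, and a pair $(a,b)$ from $A_i$ is processed at stage $n=\langle i,j\rangle$ with $\eta_i(j)=(a,b)$, so $(a,b)A_{n+1}$ is principal and remains principal in $R$. Your extra bookkeeping for $i\le n$ and your induction from two-generated to finitely generated ideals are both valid.
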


\begin{proof}
Since all transition maps are injective, the increasing union $R$ is a domain. Let $a,b\in R$. Choose $i$ such that $a,b\in A_i$, and choose $j\in\N$ with $\eta_i(j)=(a,b)$. This pair is processed while passing from $A_n$ to $A_{n+1}$, where $n=\langle i,j\rangle$. Hence
\[
  (a,b)A_{n+1}=gA_{n+1}
\]
for some $g\in A_{n+1}$. Extending to $R$ gives $(a,b)R=gR$. Thus $R$ is a B\'ezout domain.
\end{proof}

The functions $x$ and $y$ on $K_n$ are pulled back from $K_0$, so the line bundle
\[
  L_n=L_{x,y}\quad\text{on }K_n
\]
is the pullback of $L_0$ along the composite of the transition maps. By Lemmas~\ref{lem:orientation-descent} and~\ref{lem:monotone-composition},
\begin{equation}\label{eq:nonorientable-stages}
  L_n\text{ is nonorientable for every }n\ge0.
\end{equation}

\begin{lemma}\label{lem:key-obstruction}
There are no elements $p,q,u,v,\mu\in R$ such that
\[
  \mu\,(p,q)\,M\binom uv=1.
\]
\end{lemma}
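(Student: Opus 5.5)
Suppose, for contradiction, that $p,q,u,v,\mu\in R$ satisfy $\mu\,(p,q)\,M\binom uv=1$. All five elements lie in a single $A_n$ because $R$ is an increasing union, and the identity already holds in $A_n$. We evaluate at real points $z\in K_n$. Writing $X=z(x)$ and $Y=z(y)$, we have $X^2+Y^2=1$, since $z(\Delta)=0$ is pulled back from $K_0$. Thus $\frac12M(X,Y)$ is a rank-one idempotent with image $L_n(z)$. Because $M$ is symmetric, its image equals its row space and equals the orthogonal complement of its kernel.

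Define $v(z)=M(X,Y)\binom{z(u)}{z(v)}\in L_n(z)$. This is continuous on $K_n$ because evaluation maps are continuous. The identity $z(\mu)\,(z(p),z(q))\,v(z)=1$ shows $v(z)\neq0$ for every $z\in K_n$. Hence $v$ is a continuous nowhere-zero section of $L_n$, contradicting \eqref{eq:nonorientable-stages}.

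The only care needed is to confirm that $v(z)$ lands in $L_n(z)$. This is immediate because the image of $M(X,Y)$ equals the image of the idempotent $\frac12M(X,Y)$. One also uses that $x$ and $y$ on $K_n$ are the pulled-back coordinates. The row vector $(p,q)$ and the scalar $\mu$ serve only to certify nonvanishing, so no orientability of a dual bundle is needed. I expect no real obstacle: the difficulty of the paper has already been absorbed into \eqref{eq:nonorientable-stages}.
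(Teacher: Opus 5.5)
Your proposal is correct and follows the paper's proof essentially verbatim: you descend the identity to a finite stage $A_n$, evaluate $M\binom uv$ on $K_n$ to obtain a continuous section of $L_n$, use $\mu\,(p,q)$ to certify that it is nowhere zero, and invoke \eqref{eq:nonorientable-stages}. The only blemish is notational: you use the letter $v$ both for the ring element and for the section, which the paper avoids by naming the section $\xi$.
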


\begin{proof}
Suppose such elements exist, and choose $N$ such that all five lie in $A_N$. The identity already holds in $A_N$, because $A_N\hookrightarrow R$ is injective. For $z\in K_N$, put
\[
  \xi(z)=M(z)\binom{u(z)}{v(z)}\in L_N(z).
\]
The assumed identity gives $\mu(z)(p(z),q(z))\,\xi(z)=1$, so $\xi(z)\ne0$. Thus $\xi$ is a continuous nowhere-zero section of $L_N$, contradicting \eqref{eq:nonorientable-stages}.
\end{proof}

\begin{proof}[Proof of Theorem~\ref{thm:main}]
Take the ring $R$ from Construction~\ref{construction:ring}. By Proposition~\ref{prop:bezout-domain}, it is a B\'ezout domain. It remains to show that it is not an elementary divisor domain.

Suppose, to the contrary, that the matrix $M$ has a Smith normal form over $R$:
\begin{equation}\label{eq:putative-smith-form}
  PMQ=\diag(d_1,d_2),\qquad
  P,Q\in\GL_2(R),\qquad d_1\mid d_2.
\end{equation}
Multiplication by invertible matrices preserves the ideal generated by all matrix entries. By \eqref{eq:matrix-identities}, the entries of $M$ generate $R$. Hence $(d_1,d_2)=R$. Since $d_1\mid d_2$, this ideal equals $(d_1)$, so $d_1$ is a unit.

Let $(p,q)$ be the first row of $P$ and $(u,v)^{\mathsf T}$ the first column of $Q$. Comparing upper-left entries in \eqref{eq:putative-smith-form} gives
\[
  (p,q)M(u,v)^{\mathsf T}=d_1.
\]
Multiplying by $d_1^{-1}$ contradicts Lemma~\ref{lem:key-obstruction}. Thus $M$ has no Smith normal form over $R$, so $R$ is not an elementary divisor domain.
\end{proof}

\end{document}